\newtheorem{theorem}{Theorem}[section]
\newtheorem{lemma}[theorem]{Lemma}
\def\proofbox{\begin{picture}(6.5,6.5)
\put(0,0){\framebox(6.5,6.5){}}\end{picture}}
\newenvironment{proof}{\noindent{\it Proof.\quad}}{\hfill\proofbox}
\begin{document}

\title{Injective Simplicial Maps of the Complexes of Curves of Nonorientable Surfaces}
\author{Elmas Irmak}

\maketitle

\renewcommand{\sectionmark}[1]{\markright{\thesection. #1}}

\thispagestyle{empty}
\maketitle

\begin{abstract} Let $N$ be a compact, connected, nonorientable surface of genus $g$ with $n$ boundary components, and
$\mathcal{C}(N)$ be the complex of curves of $N$. Suppose that $g + n \leq 3$ or $g + n \geq 5$. If
$\lambda : \mathcal{C}(N) \rightarrow \mathcal{C}(N)$ is an injective simplicial map, then $\lambda$ is induced
by a homeomorphism of $N$.\end{abstract}

\maketitle

{\small Key words: Mapping class groups, simplicial maps, nonorientable surfaces

MSC: 57M99, 20F38}

\section{Introduction}

The complex of curves of a compact, connected, orientable surface was defined as an abstract simplicial complex
by Harvey in \cite{Har} as follows: The vertex set consists of nontrivial simple closed curves, where nontrivial means it does not
bound a disk and it is not isotopic to a boundary component of the surface. Vertices form a simplex if they can be represented by
pairwise disjoint simple closed curves. For a compact, connected, nonorientable surface, $N$, the complex of curves, $\mathcal{C}(N)$,
is defined similarly where a simple closed curve is called nontrivial if it does not bound a disk,
a mobius band, and it is not isotopic to a boundary component of the surface. On nonorientable surfaces the author proved that superinjective
simplicial maps of complexes of curves are induced by homeomorphism in \cite{Ir5}, and improved that result by proving simplicial maps
that satisfy connectivity property are induced by homeomorphisms in \cite{Ir6}. In this paper we improve these results and prove that
injective simplicial maps are induced by homeomorphisms.\\

The main result of this paper is the following:

\begin{theorem} Let $N$ be a compact, connected, nonorientable surface of genus $g$ with
$n$ boundary components. Suppose that $g + n \leq 3$ or $g + n \geq 5$.
If $\lambda : \mathcal{C}(N) \rightarrow \mathcal{C}(N)$ is an injective simplicial map,
then $\lambda$ is induced by a homeomorphism $h : N \rightarrow N$ (i.e $\lambda([a]) = [h(a)]$
for every vertex $[a]$ in $\mathcal{C}(N)$).\end{theorem}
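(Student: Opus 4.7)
The plan is to reduce to the author's earlier theorem from \cite{Ir5}, according to which every \emph{superinjective} simplicial self-map of $\mathcal{C}(N)$ is induced by a homeomorphism. Recall that $\lambda$ is superinjective when $i(a,b) \neq 0$ implies $i(\lambda(a),\lambda(b)) \neq 0$; equivalently, the image of a non-edge in $\mathcal{C}(N)$ is again a non-edge. Since every injective simplicial map automatically sends edges to edges, upgrading injectivity to superinjectivity is the one substantive step, after which \cite{Ir5} completes the proof.

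To establish superinjectivity, I would first set up a dictionary between topological type of a curve and combinatorial type of a vertex in $\mathcal{C}(N)$. Using only the structure of links and stars, one can distinguish a vertex represented by a one-sided curve from one represented by a two-sided curve, since the surface obtained by cutting along a one-sided curve has different complexity than the one obtained by cutting along a two-sided curve, and this difference is reflected in the dimensions and local structure of the respective links. In the same spirit one distinguishes separating from non-separating two-sided curves, and further refines by the topological types of the complementary pieces. Injectivity of $\lambda$, combined with a dimension count comparing the link of $[a]$ with the link of $\lambda([a])$ (which must contain an injective image of the former), forces $\lambda$ to preserve each such type.

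The core step is a combinatorial witness for the relation $i(a,b) \neq 0$. Suppose, for contradiction, that $i(a,b) \neq 0$ but $\lambda(a)$ and $\lambda(b)$ are disjoint. I would choose a pants-type decomposition $P$ of $N$ containing $a$, so that $[a]$ lies in a top-dimensional simplex of $\mathcal{C}(N)$; by injectivity $\lambda(P)$ is a top-dimensional simplex, hence again a pants-type decomposition. Using the preservation of topological types together with the fact that $\lambda(b)$ is edge-adjacent to every vertex of $\lambda(P)$ except possibly $\lambda(a)$, one produces an auxiliary curve $c$ disjoint from $a$ but not from $b$ whose image under $\lambda$ collides with the image of another such carefully chosen curve, violating injectivity. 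This contradicts the assumption and yields superinjectivity.

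The main obstacle, and where the hypotheses on $g$ and $n$ enter, is a case analysis over the topological type of the regular neighborhood of $a \cup b$. When this neighborhood is a four-holed sphere, a one-holed torus, a Klein bottle with holes, or one of their nonorientable analogues, one has finitely many local models and must supply explicit witnesses in each case. The nonorientable setting adds the complication of one-sided curves and moves such as the $Y$-homeomorphism, requiring local models beyond those used in the orientable case. The low-complexity cases $g+n \leq 3$ and the boundary case $g+n = 5$ are the most delicate: there $\mathcal{C}(N)$ is sparse and the combinatorial witnesses must be constructed essentially by hand. The exclusion of $g+n = 4$ reflects the known exceptional behavior of curve complexes at that complexity, where the present scheme breaks down.
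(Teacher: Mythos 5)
Your architecture---upgrade injectivity to superinjectivity and then invoke the theorem of \cite{Ir5} as a black box---hinges on exactly the step your sketch does not supply. Your ``core step'' tacitly assumes that for an arbitrary pair with $i([a],[b])\neq 0$ you can choose a top-dimensional pants decomposition $P$ containing $a$ so that $\lambda([b])$ is known to be disjoint from every vertex of $\lambda([P])$ except possibly $\lambda([a])$. That is only available when $b$ is disjoint from every curve of $P\setminus\{a\}$, i.e.\ when $a$ and $b$ form what the paper calls a pair with \emph{small intersection} (two top-dimensional pants decompositions differing only in $a$ versus $b$); for a general intersecting pair $b$ meets many curves of $P$ and the maximality argument gives nothing. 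Your fallback---a case analysis over the topological type of a regular neighborhood of $a\cup b$, with ``finitely many local models''---also fails: when $i(a,b)$ is large there is no finite list of configurations of $a\cup b$ up to homeomorphism, so explicit witnesses cannot be supplied case by case. Full superinjectivity for arbitrary pairs is essentially as hard as the theorem itself, and your proposal does not close that gap.

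The paper avoids this entirely. It proves intersection preservation \emph{only} for small-intersection pairs (Lemma \ref{smallint}, a short maximal-simplex dimension argument), then shows that injectivity alone yields preservation of nonadjacency and adjacency of curves within top-dimensional pants decompositions (Lemmas \ref{nonadjacent}--\ref{adjacent}) and preservation of the class of one-sided curves with nonorientable complement (Lemma \ref{1-sided-cn-2}), and then \emph{re-runs the proof} of Theorem \ref{super} from \cite{Ir5} with these weaker inputs, by induction on $g$, rather than citing that theorem as a finished result. Two further points: Theorem \ref{super} as stated does not cover $(g,n)=(1,2)$, which your target statement includes, so even granting superinjectivity your reduction would leave that case open (the paper treats $g+n\le 3$ separately); and the paper records $g+n=4$ as open, not as a case of known exceptional behavior.
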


Simplicial maps of the complexes of curves are studied to get information about the mapping class groups. On orientable surfaces,
the extended mapping class group is defined as the group of isotopy classes of all self-homeomorphisms of the surface.

On compact, connected, orientable surfaces: Ivanov proved that automorphisms of the complexes of curves are induced by homeomorphisms \cite{Iv1}.
By using this result he classified isomorphisms between any two finite index subgroups of the extended mapping class groups \cite{Iv1}.
Korkmaz proved Ivanov's results for lower genus cases \cite{K1}, and Luo gave a proof for all cases \cite{L}.
Ivanov-McCarthy classified injective homomorphisms between mapping class groups \cite{IMc}.
The mapping class group is isomorphic to the the automorphism group of several complexes on orientable surfaces. These isomorphisms are given for  complexes such as the complex of pants decompositions (proven by Margalit \cite{M}),
the complex of nonseparating curves (proven by the author \cite{Ir3}), the complex
of separating curves (proven by Brendle-Margalit \cite{BM1}, and McCarthy-Vautaw \cite{MV}), the complex of Torelli geometry (proven by
Farb-Ivanov \cite{FIv}), the Hatcher-Thurston complex (proven by Irmak-Korkmaz \cite{IrK}), and the complex of arcs (proven by Irmak-McCarthy \cite{IrM}). Farb-Ivanov obtained applications showing that the automorphism group of the Torelli subgroup is isomorphic to the mapping class group \cite{FIv}. This result was extended by McCarthy-Vautaw to genus at least 3 \cite{MV}.

Superinjective simplicial maps were defined by the author in \cite{Ir1} on compact, connected orientable surfaces as simplicial maps of the complexes of curves which preserve geometric intersection zero and nonzero properties of the vertices. The author proved that superinjective simplicial
maps are induced by homeomorphisms of the surfaces. As an application she gave a classification of injective homomorphisms from
finite index subgroups of the extended mapping class group to the extended mapping class group for genus at least two \cite{Ir1}, \cite{Ir2}, \cite{Ir3}. Behrstock-Margalit and Bell-Margalit proved author's results for small genus cases \cite{BhM}, \cite{BeM}. Brendle-Margalit
proved that if $K$ is the subgroup of mapping class group generated by Dehn twists about separating curves, then any injection from a finite index subgroup of $K$ to the Torelli group is induced by a homeomorphism \cite{BM1}. They obtained this as an application, after proving that
superinjective simplicial maps of separating curve complex are induced by homeomorphisms. Shackleton proved that injective simplicial maps also behave this way, i.e. they are also induced by homeomorphisms, and he obtained strong local co-Hopfian results as applications \cite{Sh}.

Kida proved several results about superinjective simplicial maps on orientable surfaces in \cite{Ki1}, \cite{Ki2}, \cite{Ki3},
and as applications he proved that for all but finitely many compact orientable surfaces the abstract commensurators of
the Torelli group and the Johnson kernel for such surfaces are naturally isomorphic to the extended mapping class group,
any injective homomorphism from a finite index subgroup of the Johnson kernel into the Torelli group for such a surface is
induced by an element of the extended mapping class group, any finite index subgroup of the Johnson kernel is
co-Hopfian. Irmak-Ivanov-McCarthy proved
that each automorphism of a surface braid group is induced by a homeomorphism of the underlying
surface, provided that this surface is a closed, connected, orientable surface of genus at least 2, and the number of strings is
at least three in \cite{IIM}. Kida and Yamagata also proved several results about superinjective simplicial maps in \cite{KiY1}, \cite{KiY2},
\cite{KiY3}, and as applications they gave a description of any injective homomorphism from a finite index subgroup of the pure braid
group with $n$ strands on a closed orientable surface of genus $g$ into the pure braid group. They proved that the abstract
commensurator of the braid group with $n$ strands on a closed orientable surface
of genus $g$ is naturally isomorphic to the extended mapping class group of a
compact orientable surface of genus $g$ with n boundary components. They also proved that for a connected,
compact and orientable surface of genus two with one boundary component any finite index subgroup
of the Torelli group for S is co-Hopfian.

Mapping class groups and abstract simplicial complexes on nonorientable surfaces are not studied as much as the orientable case.
Here are some known results for nonorientable surfaces: Atalan proved that the automorphism group of the curve complex
is isomorphic to the mapping class group for most odd genus cases. The author proved that each injective simplicial map of the
complex of arcs is induced by a homeomorphism, and the automorphism group of the complex of arcs is isomorphic to the mapping class
group in most cases \cite{Ir4}. Atalan-Korkmaz proved that the automorphism group of the curve complex is isomorphic to the
mapping class group for most cases \cite{AK}. They also proved that two curve complexes are isomorphic if and only if the
two surfaces they are defined on are homeomorphic. The author proved that each superinjective simplicial map of the complex of curves
is induced by a homeomorphism in most cases \cite{Ir5}. She also proved that if a simplicial map of the curve complex, satisfies the
connectivity property, i.e. ``two vertices are connected by an edge if and only if their images are connected by an edge'', then it is
induced by a homeomorphism \cite{Ir6}. This result implies that superinjective simplicial maps and automorphisms are induced by homeomorphisms.
Our main result in this paper improves the above results about simplicial maps of the complex of curves. The case when $g + n = 4$ is open.

\section{Injective Simplicial Maps}

In this section we will assume that $N$ is a compact, connected, nonorientable surface of genus $g$ with $n$ boundary components.
We will work on pair of pants decompositions on $N$. We define them as follows: If $a$ is a simple close curve on $N$, let $N_a$ be
the cut surface along $a$. A set, $P$, of pairwise disjoint, nonisotopic, nontrivial simple closed curves on $N$ is called a pair of
pants decomposition of $N$ if each component of $N_P$ is a pair of pants. The set of isotopy classes of elements of $P$ forms the vertices
of a maximal simplex of $\mathcal{C}(N)$. Every maximal simplex of $\mathcal{C}(N)$ is obtained by the isotopy classes of elements of some pair of
pants decomposition of $N$. All maximal simplices in the complexes of curves have the same dimension on orientable surfaces, but $\mathcal{C}(N)$
has different dimensional maximal simplices. We show several pants decompositions on a closed surface of genus 7 in Figure \ref{New-Fig0}.
These pants decompositions correspond to maximal simplices in $\mathcal{C}(N)$. They have different dimensions. There are cross signs in
the figures. This means that we remove the interiors of the disks which have cross signs in them, and then we identify the antipodal points
of the resulting boundary components.\\

\begin{figure}
\begin{center}
\epsfxsize=2.2in \epsfbox{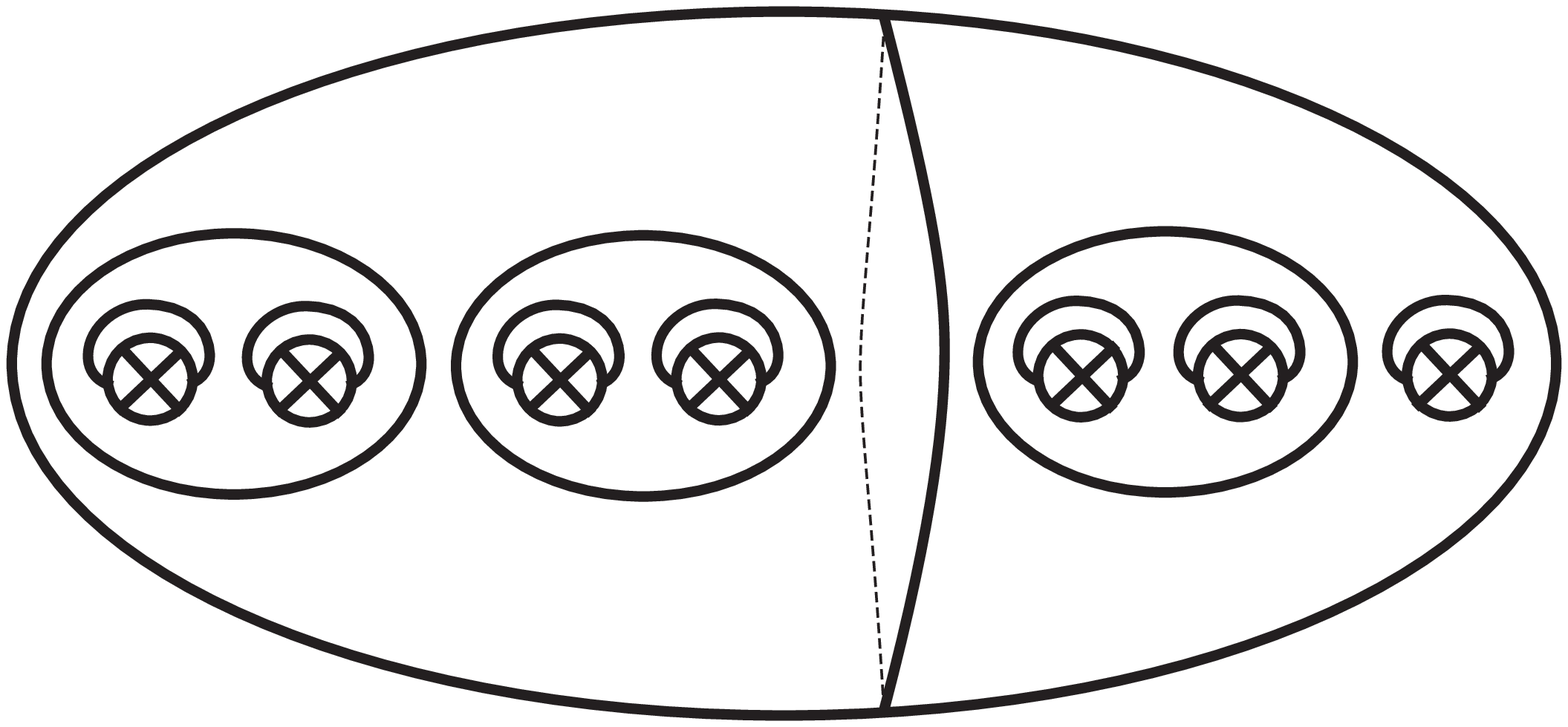} \hspace{0.1in} \epsfxsize=2.2in
\epsfbox{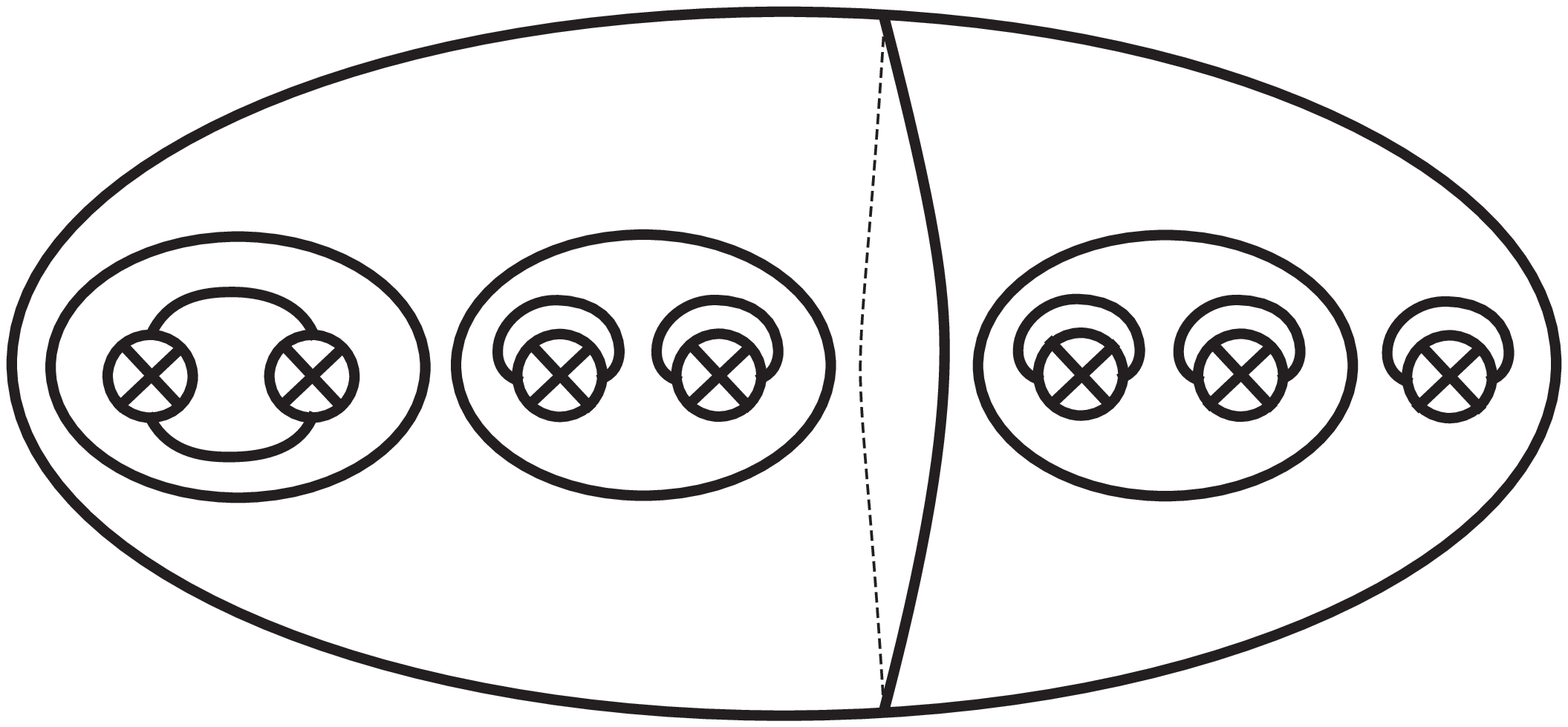} \hspace{0.1in} \epsfxsize=2.2in \epsfbox{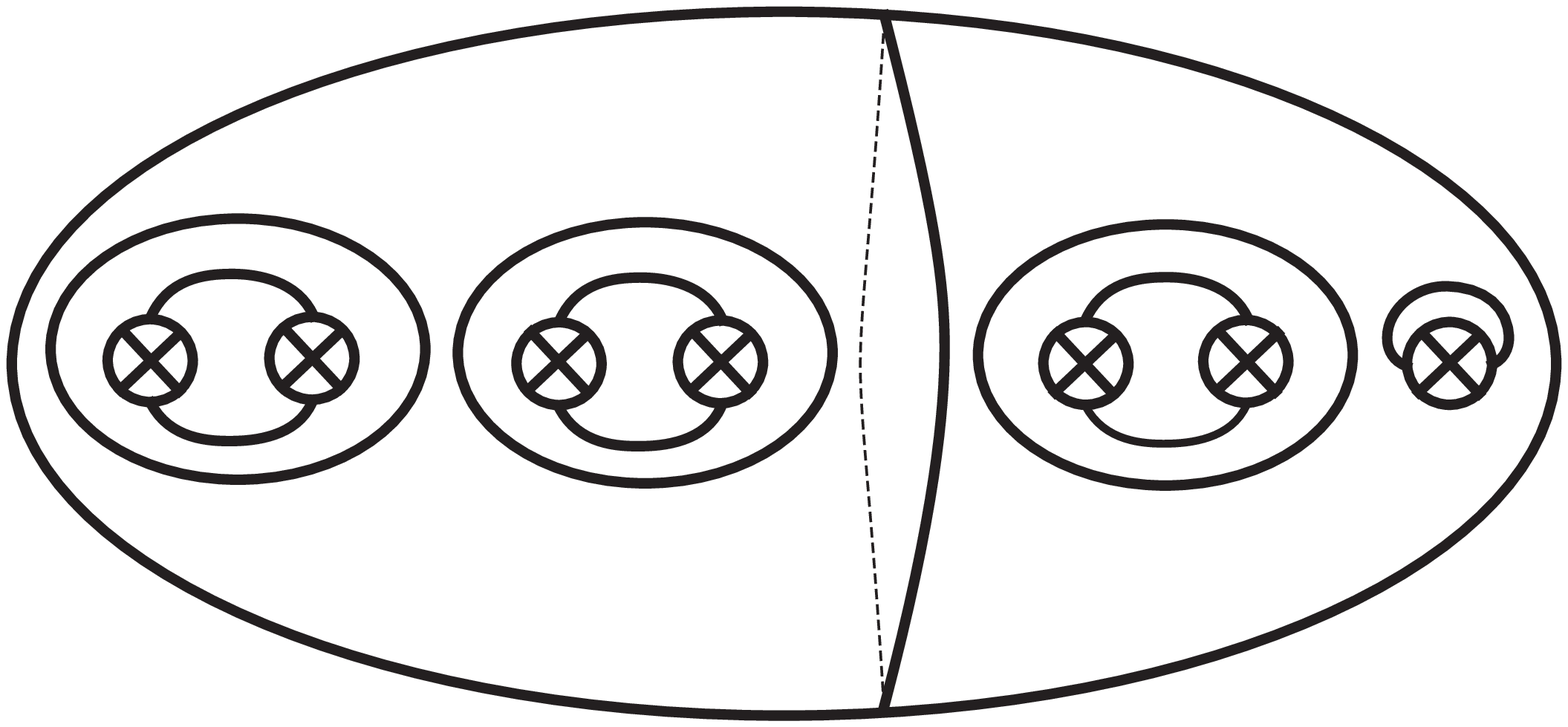}
\caption{Pants decompositions}
\label{New-Fig0}
\end{center}
\end{figure}

The following lemma is given in \cite{A} and \cite{AK}:

\begin{lemma}
\label{dim} Let $N$ be a nonorientable surface of genus $g \geq 2$ with $n$ boundary components. Suppose that $(g, n) \neq (2, 0)$.
Let $a_r= 3r+n-2$ and $b_r = 4r +n -2$ if $g = 2r+1$, and let $a_r= 3r+n-4$ and $b_r= 4r +n -4$ if $g =2r$.
Then there is a maximal simplex of dimension $q$ in $\mathcal{C}(N)$ if and only if $a_r \leq q \leq b_r$.\end{lemma}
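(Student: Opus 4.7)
The plan is to parametrize maximal simplices by the number of one-sided curves they contain. Let $P$ be a pants decomposition realizing a maximal simplex of dimension $q$, so $|P| = q+1$, and let $s$, $t$ be the numbers of two-sided and one-sided curves of $P$ in $N$. Since cutting preserves Euler characteristic and each pair of pants has $\chi = -1$, the cut surface $N_P$ consists of $k = g+n-2$ pants. Its $3k$ boundary circles come from the $n$ boundary components of $N$, from the two-sided curves (each contributing $2$, since an annular neighborhood cuts into two circles), and from the one-sided curves (each contributing $1$, since a Mobius neighborhood cuts into a single circle). This gives $2s + t = 3g + 2n - 6$, which combined with $s + t = q+1$ yields
\[
q \;=\; \tfrac{1}{2}(t + 3g + 2n) - 4,
\]
and in particular $t \equiv g \pmod 2$.

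To find the range of $q$ I would establish the range of $t$. The parity relation already forces $t \geq 0$ when $g$ is even and $t \geq 1$ when $g$ is odd; plugging these minimal values into the formula above recovers $q = a_r$ in both cases. For the upper bound, I would use the standard fact that $N$ contains at most $g$ pairwise disjoint, pairwise non-isotopic one-sided simple closed curves; this is proved by capping each one-sided curve off with a disk and checking that the resulting surface $N^*$ (with $n$ boundary components and $\chi(N^*)=\chi(N)+t$) has nonnegative genus, orientable of genus $(g-t)/2$ or nonorientable of genus $g-t$, forcing $t \leq g$. Plugging $t=g$ recovers $q = b_r$. Since $t$ varies in steps of $2$ and $q$ in steps of $1$, every integer in $[a_r, b_r]$ is thus a candidate.

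For realizability, I would exhibit a pants decomposition achieving each admissible $t$ by presenting $N$ as a sphere with $n$ boundary components and $g$ crosscaps, then choosing $\tfrac{g-t}{2}$ disjoint pairs of crosscaps to enclose in Klein-bottle pieces bounded by two-sided curves, leaving the remaining $t$ crosscaps each enclosed by an isolated one-sided curve. Completing this to a full pants decomposition on the resulting planar complement yields a maximal simplex of the required dimension. The main obstacle is precisely this realizability step: the boundary-circle count only constrains $(s, t)$ algebraically, and for each admissible value one must still verify that an actual pants decomposition achieves it. The construction above handles the generic case, and concrete pictures such as those in Figure \ref{New-Fig0} make it explicit; the cases where $g + n$ is small need extra care to ensure all constructed curves are essential and pairwise non-isotopic.
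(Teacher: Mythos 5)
The paper does not prove this lemma at all: it is imported from \cite{A} and \cite{AK}, so there is no internal proof to compare yours against, and your submission has to be judged on its own. The quantitative core of your argument is correct and is the standard one. From $\chi(N)=2-g-n$ you get $g+n-2$ pairs of pants; the boundary-circle count gives $2s+t=3g+2n-6$; together with $s+t=q+1$ this yields $q=\tfrac{1}{2}(t+3g+2n)-4$ and $t\equiv g\pmod 2$. Combined with $0\le t\le g$ this gives exactly the asserted range $a_r\le q\le b_r$, so the ``only if'' direction is complete. (Your upper bound $t\le g$ is fine, though the phrase ``capping each one-sided curve off with a disk'' should read ``cutting along the curves and capping the resulting boundary circles with disks''; one cannot cap a one-sided curve itself.) You also implicitly use that maximal simplices of $\mathcal{C}(N)$ correspond exactly to pants decompositions in both directions; the paper asserts this in Section 2 and it deserves at least a sentence, since a complementary piece could a priori be a M\"obius band rather than a pair of pants --- this is ruled out because its boundary curve would then be inessential.

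The one place where your write-up is a sketch rather than a proof is realizability, and you flag this yourself; the repairs needed are concrete. First, a Klein bottle with one hole is not a pair of pants (it has $\chi=-1$ but is nonorientable), so each of your ``Klein-bottle pieces'' must itself be decomposed further --- by its two-sided nonseparating curve with orientable complement, which cuts it into a single pair of pants and contributes no one-sided curves, as required. Second, a curve ``enclosing'' a single crosscap bounds a M\"obius band and is therefore not a vertex of $\mathcal{C}(N)$; what belongs to $P$ is the core one-sided curve of that crosscap, whose cut locus supplies one boundary circle to the planar complement. Third, in extreme configurations the enclosing curves degenerate (they may bound a disk, or be isotopic to a boundary component of $N$ or to one another); this is precisely why $(g,n)=(2,0)$ is excluded and why the small values of $g+n$ must be checked individually rather than waved at. With these points filled in, your construction realizes every admissible $t$, and since $q$ is an affine function of $t$ this realizes every integer in $[a_r,b_r]$, completing the ``if'' direction.
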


Let $\Delta$ be a maximal simplex in $\mathcal{C}(N)$. $\Delta$ will be called a top dimensional maximal simplex if $\Delta$ has
the highest dimension.

\begin{lemma}
\label{tp} Let $g \geq 2$. Suppose that $(g, n) = (3, 0)$ or $g+n \geq 4$. Let $P$ be pair of pants which corresponds to
a top dimensional maximal simplex in $\mathcal{C}(N)$. The curves in $P$ are either separating or 1-sided whose complement is nonorientable.
In $P$, the number of 1-sided curves whose complement is nonorientable is $g$, and the number of separating curves
is  $2r + n - 2$ if $g = 2r+1$, and $2r +n -3$ if $g=2r$.
\end{lemma}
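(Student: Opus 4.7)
My plan is to pin down the numerical data by combining an Euler characteristic count with Lemma \ref{dim}, and then to rule out the two forbidden types of curve (1-sided with orientable complement, and 2-sided non-separating) by a short contradiction argument.

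The first step is bookkeeping. Cutting $N$ along $P$ produces a disjoint union of pairs of pants, so the number of pants is $p=-\chi(N)=g+n-2$, contributing $3p$ boundary circles. These circles account for the $n$ boundary components of $N$, two copies of each of the $s$ 2-sided curves of $P$, and one copy of each of the $t$ 1-sided curves, so $2s+t=3g+2n-6$. Combined with the top dimensional value $s+t=b_r+1$ from Lemma \ref{dim}, one solves to get $t=g$ in both parities, together with $s=2r+n-2$ when $g=2r+1$ and $s=2r+n-3$ when $g=2r$. It remains to identify the types of these curves, i.e.\ to show that every 2-sided curve in $P$ is separating and every 1-sided curve has nonorientable complement.

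For a 1-sided curve with orientable complement the argument is short. If $a\in P$ were such a curve then $g=2r+1$ must be odd and the cut surface $N_a$ is orientable of genus $r$ with $n+1$ boundary components, and $P\setminus\{a\}$ is a pants decomposition of $N_a$. The classical orientable bound $3r+(n+1)-3$ on the size of a pants decomposition forces $|P|\le 3r+n-1$, strictly below the top dimensional value $4r+n-1$ whenever $r\ge 1$, a contradiction.

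The main obstacle is ruling out a 2-sided non-separating $a\in P$, because the cut surface $N_a$ may be either orientable or nonorientable. If $N_a$ is orientable, then $g$ is even and $N_a$ is an orientable surface of genus $(g-2)/2$ with $n+2$ boundary components; the orientable bound as above yields strictly fewer curves than the top dimensional count on $N$. If $N_a$ is nonorientable of genus $g-2\ge 2$, I would feed $N_a$ back into Lemma \ref{dim} to bound its maximal pants count, again producing strictly fewer curves than the top dimensional count on $N$. The remaining case $g=3$ (in which $N_a$ is a projective plane with $n+2$ holes, so Lemma \ref{dim} does not apply) must be handled separately, using that a projective plane with holes admits a single isotopy class of 1-sided simple closed curves; this forces any pants decomposition of it to contain at most $n+1$ curves, and the same kind of contradiction follows. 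Combining these three cases gives the desired contradiction, and together with the first step this proves the lemma.
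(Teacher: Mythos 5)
Your proposal is correct in outline and follows the same core strategy as the paper for the hard part: both rule out 1-sided curves with orientable complement and 2-sided nonseparating curves by comparing the maximal simplex dimension attainable in the cut surface (via the orientable bound $3g_0+n_0-4$ and Lemma \ref{dim}) against the top dimension $b_r$ on $N$. Where you genuinely diverge is in the counting: the paper gets the number of 1-sided curves equal to $g$ by a qualitative remark that regluing pants only along separating curves cannot produce a nonorientable surface, and then invokes Lemma \ref{dim} for the number of separating curves; your Euler characteristic bookkeeping ($3p = n + 2s + t$ with $p = g+n-2$, combined with $s+t = b_r+1$) is cleaner and delivers both counts simultaneously. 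You also correctly notice that when $g=3$ the complement of a 2-sided nonseparating curve is a genus-1 nonorientable surface, to which Lemma \ref{dim} (stated only for genus $\geq 2$) does not literally apply; the paper silently applies its formula there, so your separate treatment of this case is a genuine improvement in rigor.

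One supporting claim in that edge case is wrong as stated: a projective plane with $m\geq 2$ holes does \emph{not} have a unique isotopy class of 1-sided simple closed curves (e.g.\ one can find 1-sided curves in distinct $H_1$-classes). What you actually need, and what makes your bound of at most $m-1$ curves go through, is the weaker fact that any collection of \emph{pairwise disjoint} curves on a genus-1 nonorientable surface contains at most one 1-sided curve, since two disjoint 1-sided curves would span a genus-2 subsurface. Replace the uniqueness claim with this observation and the argument is complete.
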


\begin{proof} Let $P$ be a pair of pants decomposition such that it corresponds to a top dimensional maximal simplex, $\Delta$,
in $\mathcal{C}(N)$. Let $a \in P$. If $a$ is a 1-sided simple closed curve whose complement is orientable, then the genus
of $N$ is odd, say $g = 2r +1$ for some $r \in \mathbb{Z}$. We see that $r \geq 1$ as $g \geq 2$. The complement of $a$ is an
orientable surface of genus $r$ with $n+1$ boundary components. On an orientable surface of genus $g_0$ with $n_0$ boundary
components, all maximal simplices have dimension $3g_0 + n_0 -4$. This implies that $[a]$ can be a vertex of at most a
$3r + n - 2$ dimensional simplex in $\mathcal{C}(N)$. Since the dimension of $\Delta$ is $4r + n - 2$ and $r \geq 1$, we get
a contradiction. So, $a$ is not a 1-sided simple closed curve whose complement is orientable.

Suppose that $a$ is a 2-sided nonseparating simple closed curve. Suppose $g=2r$ for some $r \in \mathbb{Z}$. Then $r \geq 1$ as
$g \geq 2$. If the complement of $a$ is nonorientable, then it has genus $2r-2$
and $n+2$ boundary components. Then by Lemma \ref{dim}, there is at most a $4r + n - 5$ dimensional simplex containing $[a]$ as a
vertex in $\mathcal{C}(N)$. Since the dimension of $\Delta$ is $4r + n -4$, we get a contradiction. If the complement of $a$ is
orientable, then the complement is an orientable surface of genus $r-1$ with $n + 2$ boundary components. In this case there is
at most a $3r + n - 4$ dimensional simplex containing $[a]$ in $\mathcal{C}(N)$. Since the dimension of $\Delta$ is
$4r + n -4$ and $r \geq 1$, we get a contradiction. Suppose $g =2r + 1 $ for some $r \in \mathbb{Z}$. In this case complement
of $a$ is a nonorientable surface of genus $2r-1$ with $n+2$ boundary components. By Lemma \ref{dim}, there is at most a
$4r + n - 3$ dimensional simplex containing $[a]$ in $\mathcal{C}(N)$. Since the dimension of $\Delta$ is $4r + n - 2$,
we get a contradiction. So, $a$ is not a 2-sided nonseparating simple closed curve on $N$.

Hence, the curves in $P$ are either 1-sided curves whose complements are nonorientable or separating curves. Gluing pair of pants 
along curves that come from cutting along separating curves on the surface, doesn't give a nonorientable surface. So, in $P$ the 
number of 1-sided curves with nonorientable complements is $g$. By using Lemma \ref{dim}, we see that the number of separating 
curves is $2r + n - 2$ if $g = 2r+1$, and $2r +n -3$ if $g=2r$.\end{proof}\\

Let $a, b$ be two nonisotopic nontrivial simple closed curves such that they have $i([a], [b]) \neq 0$, nonzero geometric 
intersection. We will say that $a$ and $b$ have small intersection if there exists a pair of pants decomposition
$P$ on $N$ which corresponds to a top dimensional maximal simplex in $\mathcal{C}(N)$ such that $a \in P$ and
$(P \setminus \{a\}) \cup \{b\}$ also corresponds to a top dimensional maximal simplex in $\mathcal{C}(N)$.

\begin{lemma}
\label{smallint} Suppose that $g + n \geq 4$. Let $\lambda : \mathcal{C}(N) \rightarrow \mathcal{C}(N)$ be an injective simplicial map.
If $a$ and $b$ have small intersection, then $i(\lambda([a]), \lambda([b])) \neq 0$. \end{lemma}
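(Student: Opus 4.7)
The plan is to argue by contradiction: assume $i(\lambda([a]),\lambda([b]))=0$ and construct from $a$ and $b$ a simplex of $\mathcal{C}(N)$ whose dimension strictly exceeds the top dimension. Since any simplex extends to a maximal simplex (complete its curves to a pants decomposition), and the dimension of any maximal simplex is bounded by Lemma \ref{dim}, this will yield the desired contradiction. The argument is powered by the elementary fact that an injective simplicial map preserves the dimension of every simplex.

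By the definition of small intersection, fix a pants decomposition $P$ containing $a$ such that $P$ and $P':=(P\setminus\{a\})\cup\{b\}$ both correspond to top dimensional maximal simplices of $\mathcal{C}(N)$, say of common dimension $d$. Write $\Delta$ and $\Delta'$ for the two top dimensional simplices determined by $P$ and $P'$. Since $\lambda$ is a simplicial injection, $\lambda(\Delta)$ and $\lambda(\Delta')$ are again simplices of dimension $d$. Because $[b]$ and each $[c]$ with $c\in P\setminus\{a\}$ are vertices of $\Delta'$, their images are vertices of the common simplex $\lambda(\Delta')$, so $\lambda([b])$ is disjoint from every $\lambda([c])$ with $c\in P\setminus\{a\}$. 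Combined with the contradiction hypothesis $i(\lambda([a]),\lambda([b]))=0$, this shows $\lambda([b])$ is disjoint from every vertex of $\lambda(\Delta)$.

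Moreover, since $i([a],[b])\neq 0$ while every curve in $P$ is disjoint from or equal to $a$, the class $[b]$ cannot be a vertex of $\Delta$, and injectivity of $\lambda$ then gives $\lambda([b])\notin\lambda(\Delta)$. Thus $\lambda(\Delta)\cup\{\lambda([b])\}$ is a set of $d+2$ pairwise disjoint, pairwise nonisotopic nontrivial classes, spanning a simplex of $\mathcal{C}(N)$ of dimension $d+1$, strictly greater than the top dimension $d$, which is the desired contradiction.

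I do not foresee a real obstacle: the proof essentially unpacks the definition of ``small intersection'' together with the top dimensional bound from Lemma \ref{dim}. The one minor point worth checking carefully is the assertion $[b]\notin P$, but it is immediate from $i([a],[b])\neq 0$ and the pairwise disjointness of the curves in $P$.
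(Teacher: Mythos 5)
Your argument is correct and is essentially the paper's own proof: both complete $a$ to a top dimensional pants decomposition $P$ with $(P\setminus\{a\})\cup\{b\}$ also top dimensional, use injectivity to see that $\lambda$ carries these to top dimensional simplices, and conclude that if $\lambda([b])$ were also disjoint from $\lambda([a])$ it would extend the top dimensional simplex $\lambda([P])$, which is impossible. The only difference is cosmetic: the paper phrases the final step via maximality of the image simplex, while you phrase it as exceeding the dimension bound of Lemma \ref{dim}.
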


\begin{proof} Suppose $a$ and $b$ have small intersection. We complete $a$ to a pair of pants decomposition $P$ on $N$ which corresponds
to a top dimensional maximal simplex in $\mathcal{C}(N)$ such that $(P \setminus \{a\}) \cup \{b\}$ also corresponds to a top dimensional maximal simplex in $\mathcal{C}(N)$. Let $P'$ be a set of pairwise disjoint curves representing $\lambda([P])$. Since $\lambda$ is injective,
$P'$ corresponds to a top dimensional maximal simplex. Since $b$ doesn't intersect any of the curves in $P \setminus \{a\}$, $i(\lambda([b], \lambda([x])) = 0$ for any $x \in P \setminus \{a\}$. Since $\lambda([b])$ is not isotopic to $\lambda([y])$ for any $y \in P$, and $P'$
corresponds to a top dimensional maximal simplex, we see that $i(\lambda([a]), \lambda([b])) \neq 0$.\end{proof}\\

\begin{figure}
\begin{center}
\epsfxsize=2.5in \epsfbox{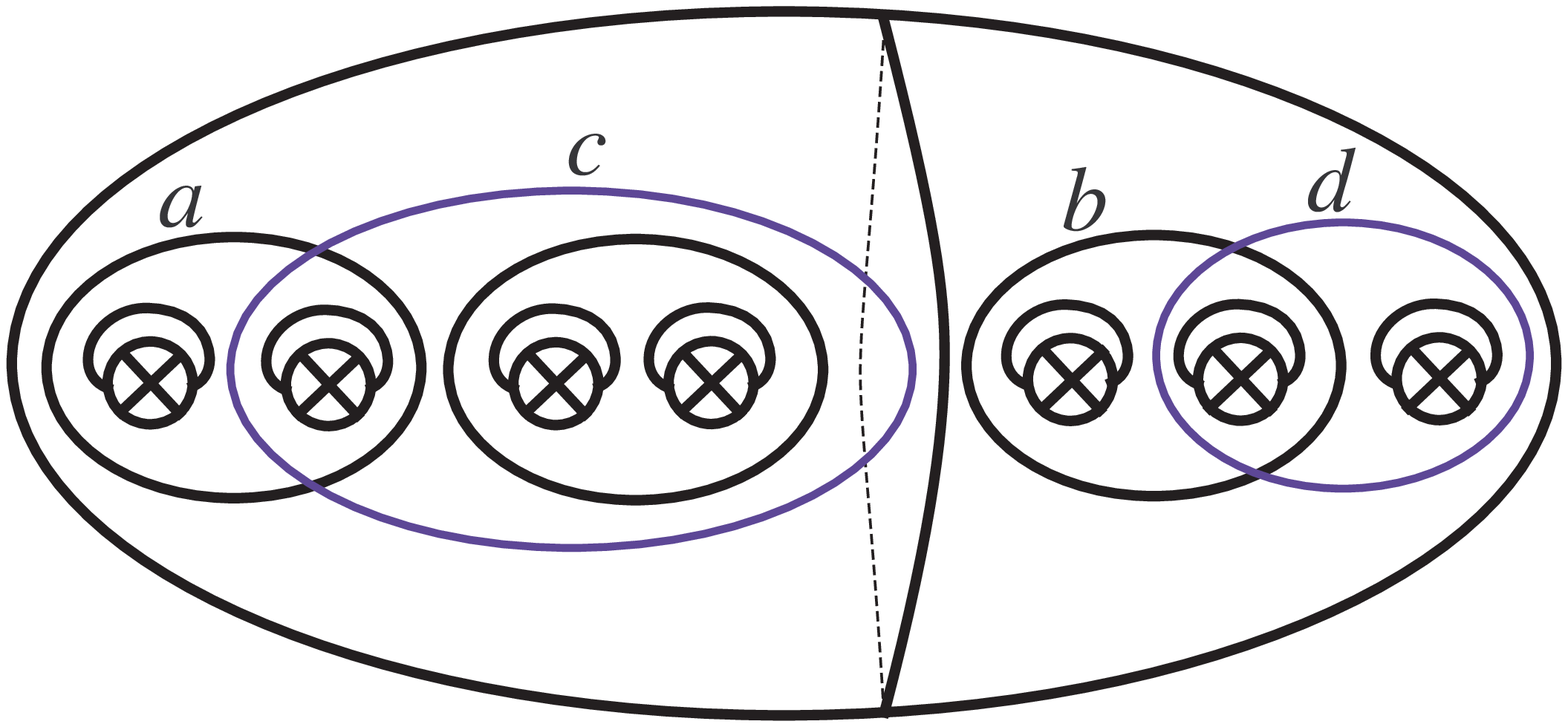} \hspace{0.2in} \epsfxsize=2.5in
\epsfbox{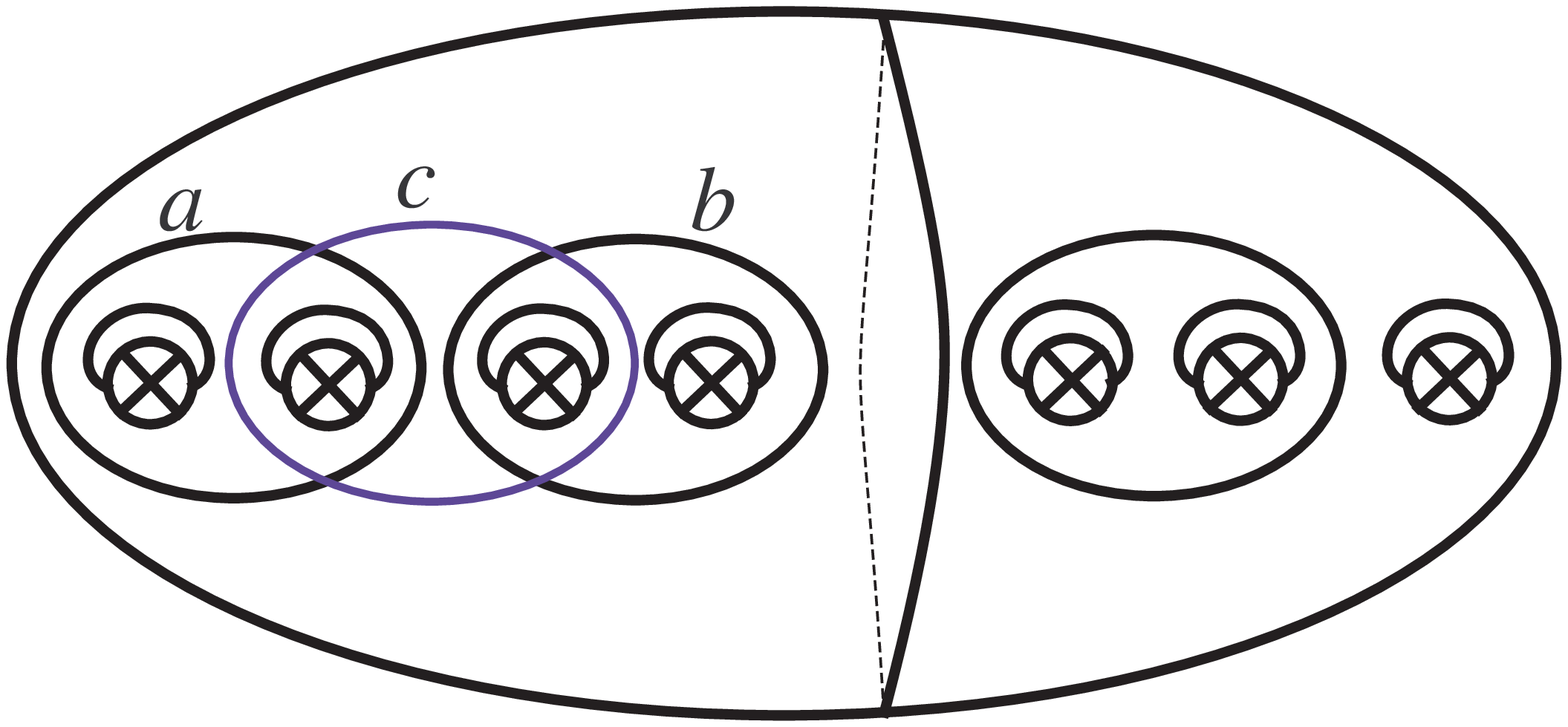}

\hspace{0.5cm} (i) \hspace{6.5cm}  (ii) \vspace{0.5cm}

\epsfxsize=2.5in \epsfbox{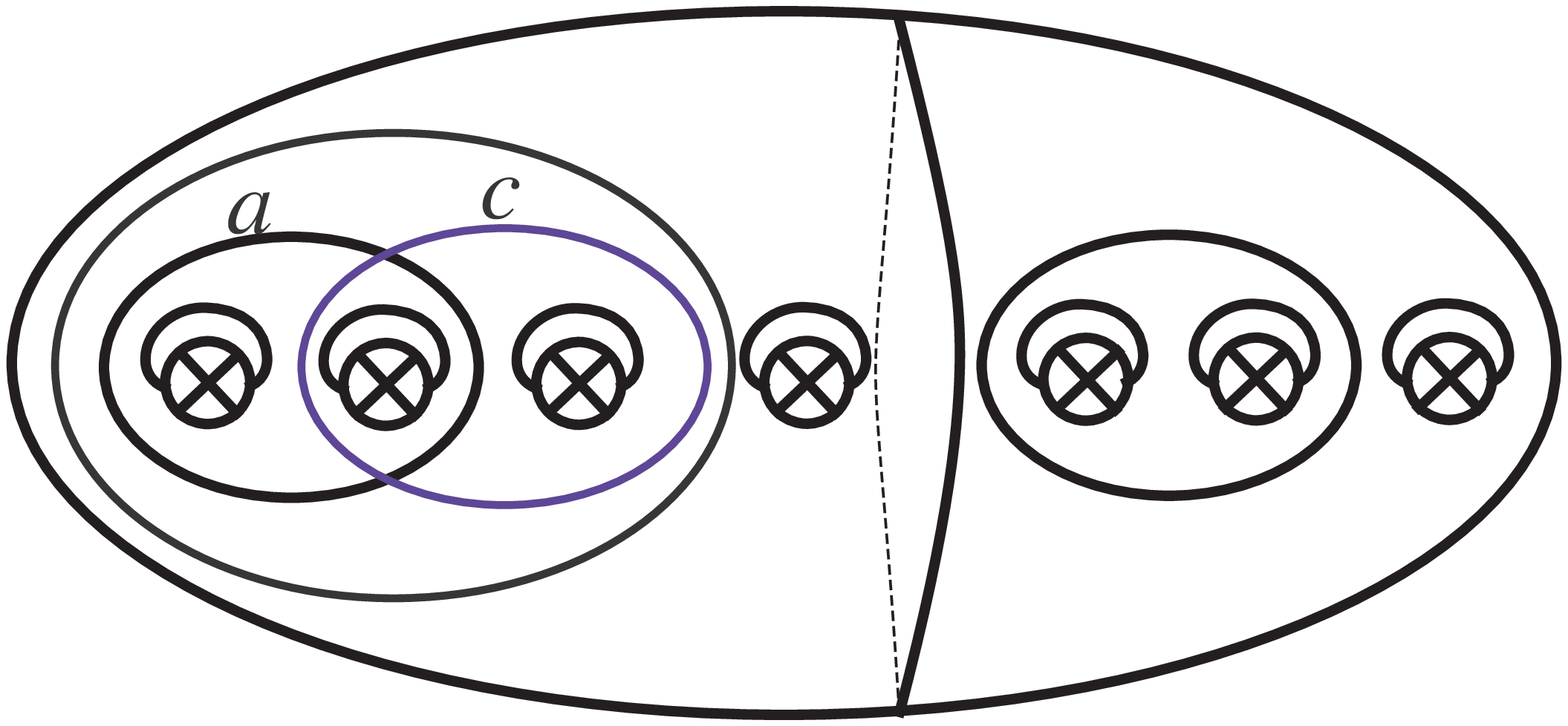} \hspace{0.2in} \epsfxsize=2.5in
\epsfbox{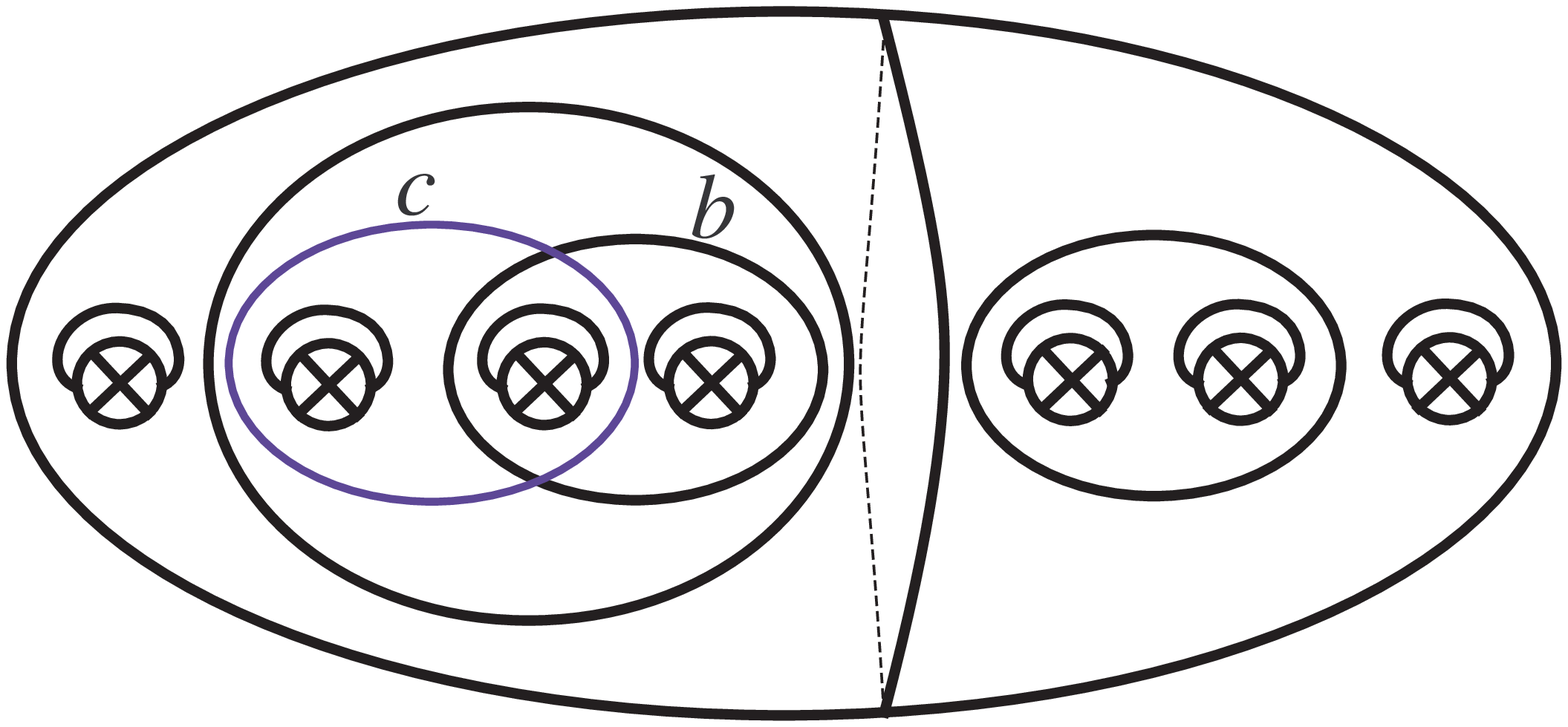}

\hspace{0.5cm} (iii) \hspace{6.5cm}  (iv)
\caption{Curves on closed surface of genus 7}
\label{Figure-adj}
\end{center}
\end{figure}

Let $a$ and $b$ be two distinct elements in a pair of pants decomposition $P$ on $N$ where $P$ corresponds to a top dimensional maximal simplex in $\mathcal{C}(N)$. Then $a$ is called adjacent to $b$ w.r.t. $P$ iff there exists a pair of pants in $P$ which has $a$ and $b$ on
its boundary. In the following lemmas we will see that adjacency and nonadjacency are preserved w.r.t. top dimensional maximal simplices.

\begin{lemma}
\label{nonadjacent} Suppose that $g + n \geq 4$. Let $\lambda : \mathcal{C}(N) \rightarrow \mathcal{C}(N)$ be an injective
simplicial map. Let $P$ be a pair of pants decomposition on $N$ which corresponds to a top
dimensional maximal simplex in $\mathcal{C}(N)$. Let $a, b \in P$ such that $a$ is not adjacent to $b$
w.r.t. $P$. There exists $a'  \in \lambda([a])$ and $b'  \in \lambda([b])$ such that $a'$ is not adjacent
to $b'$ w.r.t. $P'$ where $P'$ is a set of pairwise disjoint curves representing $\lambda([P])$ containing $a', b'$.\end{lemma}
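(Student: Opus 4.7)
The plan is to argue by contradiction using Lemma \ref{smallint}. Suppose $a$ and $b$ are non-adjacent in $P$ but, in every realization $P'$ of $\lambda([P])$ by pairwise disjoint curves, the representatives $a'=\lambda([a])$ and $b'=\lambda([b])$ lie on a common pair of pants $Y'$ of $P'$. Since $\lambda$ is injective, $\lambda([P])$ has the same dimension as $[P]$, and because $[P]$ is top dimensional, so is $\lambda([P])$; hence $P'$ satisfies the structural conclusion of Lemma \ref{tp}, so every curve in $P'$ is either separating or $1$-sided with nonorientable complement.

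I would next build a witness curve detecting the non-adjacency of $a$ and $b$. Let $R_a\subset N$ be the union of the pair(s) of pants in $N_P$ whose boundary contains $a$. Non-adjacency of $a,b$ in $P$ means that $b$ is not a boundary component of $R_a$, so $b\cap R_a=\emptyset$. Using the hypothesis $g+n\geq 4$ together with the structural Lemma \ref{tp}, I can choose a simple closed curve $c\subset R_a$ with $i([a],[c])\neq 0$ such that $(P\setminus\{a\})\cup\{c\}$ is again a top dimensional pair of pants decomposition; equivalently, $a$ and $c$ have small intersection. The construction is a standard flip move inside $R_a$, split into subcases according to whether $a$ is $1$-sided with nonorientable complement or separating, with the type of $c$ dictated by Lemma \ref{tp}. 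By construction, $c$ is disjoint from every curve of $P\setminus\{a\}$, in particular from $b$. Symmetrically, I choose a curve $d\subset R_b$ with small intersection to $b$, disjoint from $c$ since $R_a\cap R_b=\emptyset$, and hence $S=(P\setminus\{a,b\})\cup\{c,d\}$ is again a top dimensional pair of pants decomposition.

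Applying Lemma \ref{smallint} to the pairs $(a,c)$ and $(b,d)$ gives $i(\lambda([a]),\lambda([c]))\neq 0$ and $i(\lambda([b]),\lambda([d]))\neq 0$, while simpliciality of $\lambda$ forces $\lambda([c])$ to be disjoint from every curve of $P'\setminus\{a'\}$ and $\lambda([d])$ to be disjoint from every curve of $P'\setminus\{b'\}$. Realize these images as curves $c'\subset R_{a'}$ and $d'\subset R_{b'}$ in minimal position with $P'$. The standing assumption that $a'$ and $b'$ share $Y'$ forces $R_{a'}$ and $R_{b'}$ to overlap along $Y'$. A direct analysis of the resulting configuration, cutting the $5$-holed sphere $R_{a'}\cup R_{b'}$ by the two disjoint essential curves $c'$ and $d'$ into three pairs of pants, shows that in the top dimensional decomposition $S'=(P'\setminus\{a',b'\})\cup\{c',d'\}$ the curves $c'$ and $d'$ must share a common pair of pants $Y_{cd}'$, whereas $c$ and $d$ share no pair of pants in $S$ (since $R_a\cap R_b=\emptyset$). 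The final contradiction is obtained by constructing an auxiliary small-intersection curve inside this shared pair of pants $Y_{cd}'$ and applying Lemma \ref{smallint} once more to derive an intersection-number constraint incompatible with the disjoint-regions picture on the source side.

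The main obstacle is precisely this last step: converting the geometric dichotomy between the ``disjoint regions'' structure of $R_a,R_b$ in $P$ and the ``overlapping regions'' structure of $R_{a'},R_{b'}$ in $P'$ into a concrete intersection-number contradiction that Lemma \ref{smallint} can detect. A careful case analysis is needed, split according to (i) whether $a$ and $b$ are $1$-sided or separating, which determines the topology of the regions $R_a,R_b$ and the possible flip types for $c,d$, and (ii) whether the third boundary of $Y'$ is another curve of $P'$ or a boundary component of $N$. Throughout, the rigid structural conclusion of Lemma \ref{tp} keeps the number of cases finite and manageable.
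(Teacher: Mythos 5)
Your setup coincides with the paper's: you build disjoint witness curves $c\subset R_a$ and $d\subset R_b$ with small intersection with $a$ and $b$ respectively and disjoint from the rest of $P$, then use Lemma \ref{smallint} and simpliciality to conclude that $\lambda([c])$ meets only $\lambda([a])$ among $\lambda([P])$, that $\lambda([d])$ meets only $\lambda([b])$, and that $i(\lambda([c]),\lambda([d]))=0$. Up to that point you are reproducing the paper's argument. The gap is in your endgame. The paper stops here, because the configuration you go on to analyze cannot exist: if $a'$ and $b'$ cobound a pair of pants $Y'$ of $P'$, then any representative $c'$ of $\lambda([c])$ in minimal position with $P'$ meets $Y'$ in a nonempty collection of essential arcs with endpoints on the $a'$-boundary of $Y'$, and likewise $d'$ meets $Y'$ in essential arcs based at the $b'$-boundary; in a pair of pants an essential arc based at one boundary component separates the other two and hence must cross an essential arc based at a different boundary component. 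So $c'$ and $d'$ are forced to intersect, contradicting $i(\lambda([c]),\lambda([d]))=0$. That \emph{is} the contradiction --- no further construction is needed. By instead taking the disjointness of $c'$ and $d'$ as a standing fact and ``cutting the $5$-holed sphere $R_{a'}\cup R_{b'}$ by the two disjoint curves $c'$ and $d'$,'' you are reasoning inside a vacuous configuration without noticing that its nonexistence is the point.

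Beyond the missed punchline, the step you flag as the main obstacle is not just hard but unworkable as stated: you propose to construct an auxiliary curve inside the shared pair of pants $Y'_{cd}$ of the \emph{target} decomposition $S'$ and ``apply Lemma \ref{smallint} once more.'' Lemma \ref{smallint} only transfers intersection information from a curve in the source to its image under $\lambda$; since $\lambda$ is merely injective (not surjective), a curve manufactured downstream in the target has no preimage to which the lemma could be applied, so no constraint can be pushed back ``to the source side.'' A secondary inaccuracy: $R_{a'}\cup R_{b'}$ need not be a $5$-holed sphere, since $a'$ or $b'$ may be $1$-sided (Lemma \ref{tp} only excludes $2$-sided nonseparating curves), in which case $R_{a'}$ is a genus-one nonorientable piece rather than a union of two pairs of pants glued along $a'$. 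Replacing your last two steps with the arcs-in-a-pair-of-pants observation above closes the argument and recovers the paper's proof.
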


\begin{proof} Let $a$ and $b$ be two curves in $P$. Assume that $a$ is not adjacent to $b$ w.r.t. $P$. We can choose simple closed curves
$c, d$ on $N$ such that $c$ and $a$ have small intersection, $c$ doesn't intersect any of the curves in $P \setminus \{a\}$, 
$d$ and $b$ have small intersection, $d$ doesn't intersect any of the curves in $P \setminus \{b\}$, $c$ and $d$ are disjoint,
and $a, b, c, d$ are all pairwise nonisotopic. In Figure \ref{Figure-adj} (i), we show how to choose $c, d$ for a special case on
a closed surface of genus 7 case, where $a, b, P$ are as shown in the figure. Let $a'  \in \lambda([a])$ and $b'  \in \lambda([b])$.
We choose a set of pairwise disjoint simple closed curves representing $\lambda([P])$ containing $a', b'$, call it $P'$. By using 
Lemma \ref{smallint} we see that $i(\lambda([a]), \lambda([c])) \neq 0$, $i(\lambda([b]), \lambda([d])) \neq 0$. Since $\lambda$ is 
injective we also  know that none of the two elements in $\{\lambda([a]), \lambda([b]), \lambda([c]), \lambda([d]) \}$ are equal. Since 
$c$ doesn't intersect any of the curves in $P \setminus \{a\}$, we see that $\lambda([c])$ has geometric intersection zero with any element in 
$\lambda([P]) \setminus \{ \lambda([a]) \}$. Similarly, $\lambda([d])$ has geometric intersection zero with any element in
$\lambda([P]) \setminus \{ \lambda([b]) \}$. Since $i([c], [d]) = 0$, we see that $i(\lambda([c]), \lambda([d])) = 0$.  This is possible 
only when $a'$ is not adjacent to $b'$ w.r.t. $P'$.
\end{proof}

\begin{lemma}
\label{1-sd} Suppose that $(g, n) =(1, 4)$ or $(2, 2)$. Let $\lambda : \mathcal{C}(N) \rightarrow \mathcal{C}(N)$ be an
injective simplicial map. If $a$ is a 1-sided simple closed curve on $N$, then there exists $a' \in \lambda([a])$ such that
$a'$ is a 1-sided simple closed curve on $N$.
\end{lemma}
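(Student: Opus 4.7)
For the case $(g,n) = (2,2)$, I plan to use a pants-decomposition configuration argument together with Lemma \ref{nonadjacent}. By Lemma \ref{tp}, a top-dimensional maximal simplex of $\mathcal{C}(N)$ is of the form $\{t_1, t_2, s\}$ with $t_1, t_2$ being $1$-sided and $s$ separating, and up to relabelling of the two pants there are exactly two such realisations: a ``triangle'' decomposition with pants $\{t_1, t_2, s\}$ and $\{s, \partial_1, \partial_2\}$, in which all three curves are pairwise adjacent, and a ``path'' decomposition with pants $\{t_1, s, \partial_1\}$ and $\{t_2, s, \partial_2\}$, whose adjacency graph is $t_1 - s - t_2$ and in which $t_1 \not\sim t_2$. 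I would complete the given $1$-sided $a$ to a pants decomposition $P = \{a, s, t\}$ in the path configuration. By injectivity of $\lambda$, the image $\lambda([P])$ is again a top-dimensional maximal simplex, so some realisation $P'$ of it is one of the two decompositions above. Lemma \ref{nonadjacent} then provides representatives of $\lambda([a])$ and $\lambda([t])$ that are non-adjacent in $P'$, which excludes the triangle case; hence $P'$ is also a path, with the separating curve in the middle and the two $1$-sided curves at the endpoints. Since $\lambda([a])$ lies at an endpoint of this path, it is $1$-sided.

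For the case $(g,n) = (1,4)$, I plan to use a link-parity argument. Since $a$ is $1$-sided, the cut surface $N_a$ is the orientable sphere with five holes $\Sigma_{0,5}$, so the link of $[a]$ in $\mathcal{C}(N)$ is isomorphic to $\mathcal{C}(\Sigma_{0,5})$. The latter is the Kneser graph on the $2$-element subsets of a $5$-set, i.e.\ the Petersen graph, and in particular contains cycles of length $5$. Assume for contradiction that $\lambda([a])$ is $2$-sided. An Euler characteristic count rules out $2$-sided nonseparating curves in $N$, and each $2$-sided separating curve has complement either $\Sigma_{0,3} \sqcup N_{1,3}$ or $\Sigma_{0,4} \sqcup N_{1,2}$, so the link of $\lambda([a])$ is either $\mathcal{C}(N_{1,3})$ or the join $\mathcal{C}(\Sigma_{0,4}) \ast \mathcal{C}(N_{1,2})$. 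Both are bipartite: in $\mathcal{C}(N_{1,3})$ the vertex set splits into $1$-sided curves (no two of which are disjoint, since the nonorientable genus is $1$) and the three separating curves (any two of which intersect, since neither side of one can accommodate the other as an essential $2$-sided separating curve); and any join of two edge-free simplicial complexes is complete bipartite. The restriction of $\lambda$ to $\mathrm{link}([a])$ is an injective simplicial map into $\mathrm{link}(\lambda([a]))$, so it must send a $5$-cycle of the Petersen graph to a $5$-cycle in the image link, which contradicts bipartiteness. Hence $\lambda([a])$ is $1$-sided.

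The most delicate step is the link computation for $(1,4)$: identifying $\mathrm{link}([a])$ with the Petersen graph when $a$ is $1$-sided, and verifying that every $2$-sided curve in $N$ has a bipartite link. Once that is in place, the parity obstruction---Petersen contains odd cycles whereas a bipartite graph does not---rules out $2$-sidedness of $\lambda([a])$. In the $(2,2)$ case the corresponding obstruction is the easier fact that the triangle pants decomposition contains no non-adjacent pair, which is exactly what Lemma \ref{nonadjacent} exploits.
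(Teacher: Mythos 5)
Your argument for $(g,n)=(2,2)$ is essentially the paper's: the author also completes $a$ to a top-dimensional decomposition $\{a,y,b\}$ in the ``path'' configuration, invokes Lemma \ref{nonadjacent} to force the image of $y$ to be separating with the images of $a$ and $b$ on opposite sides, and concludes from Lemma \ref{tp} that the endpoint curves are $1$-sided. Your $(1,4)$ argument, however, is a genuinely different route. The paper stays inside the same adjacency machinery: it completes $a$ to $P=\{a,z,t\}$, uses Lemma \ref{nonadjacent} to pin down the configuration of $P'$, assumes $a'$ separating, and then derives a contradiction from Lemma \ref{smallint} applied to two auxiliary curves $x,y$ that have small intersection with $t$ and are disjoint from $z$ (there is no room for two such nonisotopic images on the relevant side of $z'$). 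Your link-parity argument replaces this configuration chase with a global obstruction: the link of a $1$-sided class contains an odd cycle, while the link of any $2$-sided class in $N_{1,4}$ is bipartite, and an injective simplicial map sends odd closed walks to odd closed walks. This is cleaner and more conceptual, and it avoids Figure-dependent case analysis; the paper's approach has the advantage of using only the lemmas already established for the general induction. Two statements in your write-up need repair, though neither is fatal: $\mathcal{C}(\Sigma_{0,5})$ is \emph{not} the Petersen graph (it is an infinite graph; the Petersen graph only records the adjacency pattern of the ten topological types), so you should instead exhibit one explicit pentagon of five pairwise nonisotopic curves in $N_a$ whose consecutive members are disjoint --- that odd closed walk is all the contradiction requires; and $N_{1,3}$ has infinitely many isotopy classes of separating curves (three topological types), not three, though your real claim --- that the separating classes form an independent set, as do the $1$-sided ones --- is correct and is what bipartiteness needs. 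One further point worth a line in a final version: vertices of $\mathrm{link}([a])$ are isotopy classes in $N$, not in the cut surface, so the independence and disjointness assertions should be phrased via geometric intersection number in $N$ (realizing triples of pairwise disjoint representatives) rather than by identifying the link with the curve complex of the complementary subsurface.
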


\begin{proof} Let $a$ be a 1-sided simple closed curve. If $(g, n)= (1, 4)$, we complete $a$ to a pants decomposition
$P= \{a, z, t\}$ as shown in Figure \ref{Fig01} (i). Let $a' \in \lambda([a]), z' \in \lambda([z]), t' \in \lambda([t])$
and $a', z', t'$ have minimal intersection. Let $P' = \{a', z', t'\}$. $P'$ corresponds a top dimensional maximal simplex in $\mathcal{C}(N)$.
Since $a$ is not adjacent to $t$, and nonadjacency is preserved by Lemma \ref{nonadjacent}, we see that $z'$ has
to be a separating curve, $a'$ and $t'$ have to be on different sides of $z'$. Suppose $a'$ is a separating curve. Then
$a', z', t'$ are as shown in Figure \ref{Fig01} (ii). Then, we get a contradiction by using the curves $x, y$
given in part (i). The curves $x, y, a, z, t$ are pairwise nonisotopic and each $x$ and $y$ has small intersection with $t$,
and disjoint from $z$. Let $x', y'$ be representatives of $\lambda([x])$ and $\lambda([y])$ which have minimal intersection
with each of $a', z', t'$. Since $\lambda$ is injective, $x', y', a', z', t'$ are pairwise nonisotopic. By using Lemma \ref{smallint},
we see that $x'$ and $y'$ should intersect $t'$ and should be disjoint from $z'$. This gives a contradiction. So, $a'$ is not
a separating curve. Since $g=1$, $a'$ is a 1-sided curve.

If $(g, n)= (2, 2)$, we complete $a$ to a pants decomposition $P= \{a, y, b\}$ as shown in Figure \ref{Fig01} (iii).
Let $a' \in \lambda([a]), y' \in \lambda([y]), b' \in \lambda([b])$ and $a', y', b'$ have minimal intersection. Let
$P' = \{a', y', b'\}$. $P'$ corresponds to a top dimensional maximal simplex. By Lemma \ref{tp}, we know that $a'$ is either
a separating curve or a 1-sided curve. Since $a$ is not adjacent to $b$ w.r.t. $P$, and nonadjacency is preserved by
Lemma \ref{nonadjacent}, we see that $y'$ has to be a separating curve, and $a'$ and $b'$ have to be on different sides of
$y'$. This implies that $a', y', b'$ are as shown in Figure \ref{Fig01} (iv). Hence, $a'$ is a 1-sided curve.\end{proof}

\begin{figure}
\begin{center}
\epsfxsize=2.1in \epsfbox{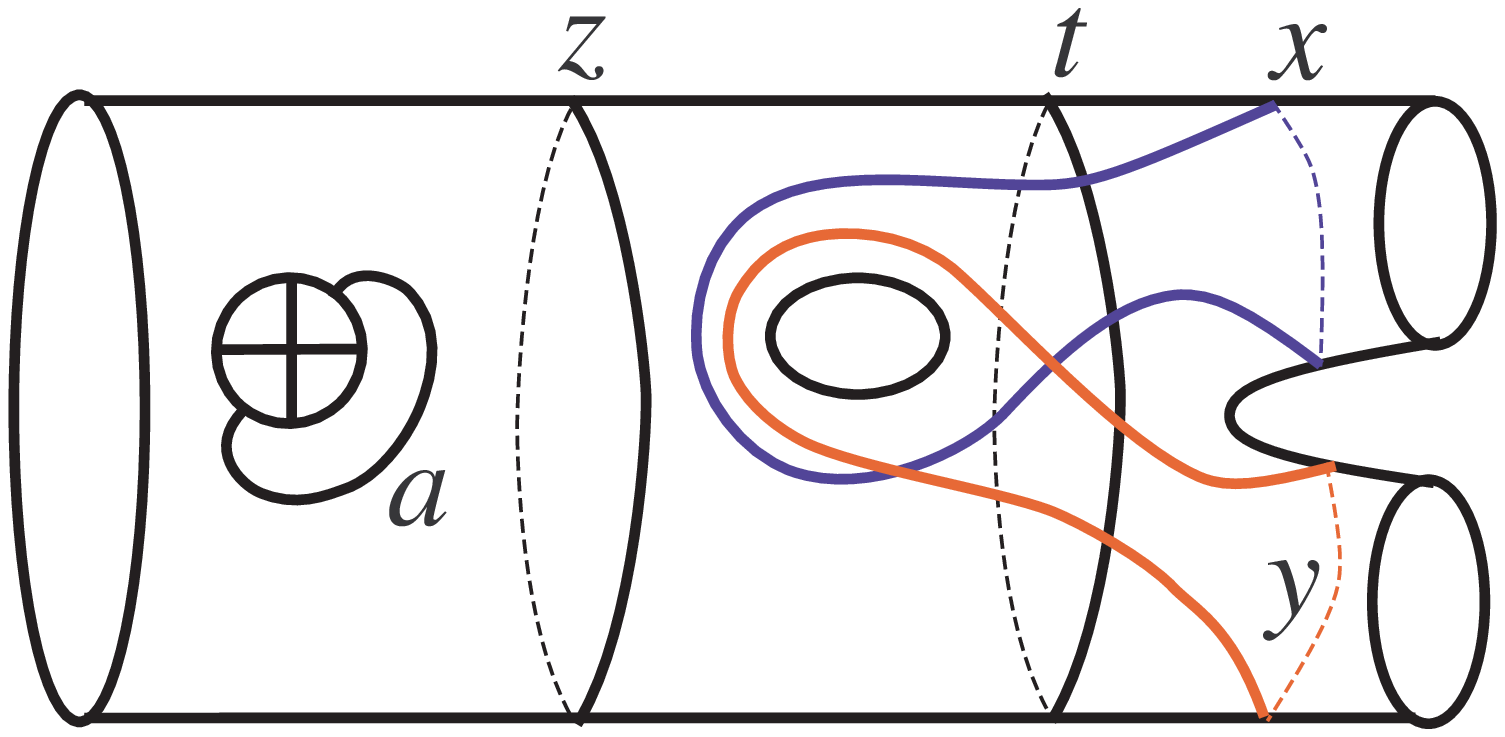}  \hspace{0.5cm} \epsfxsize=2.1in \epsfbox{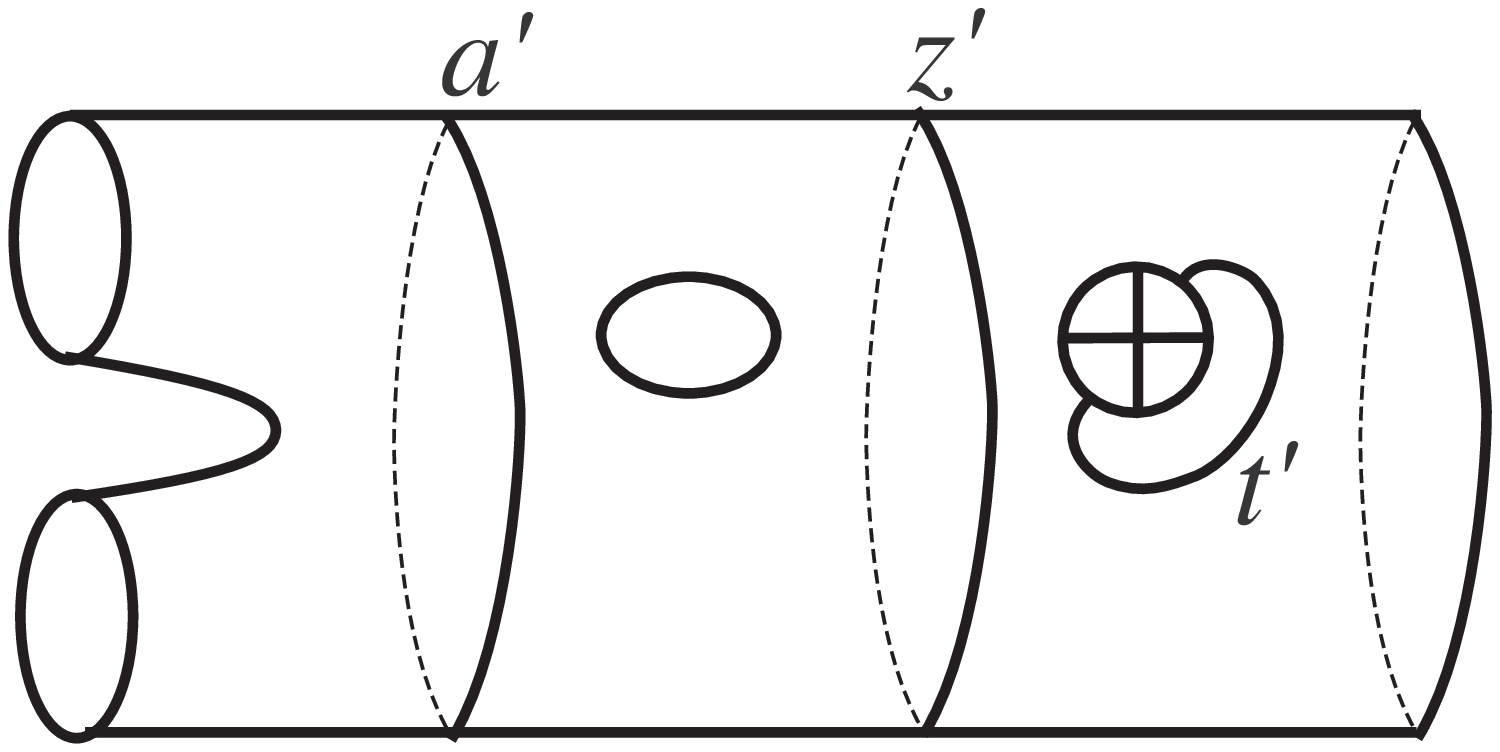}

(i) \hspace{5cm} (ii) \vspace{0.1in}

\epsfxsize=1.78in \epsfbox{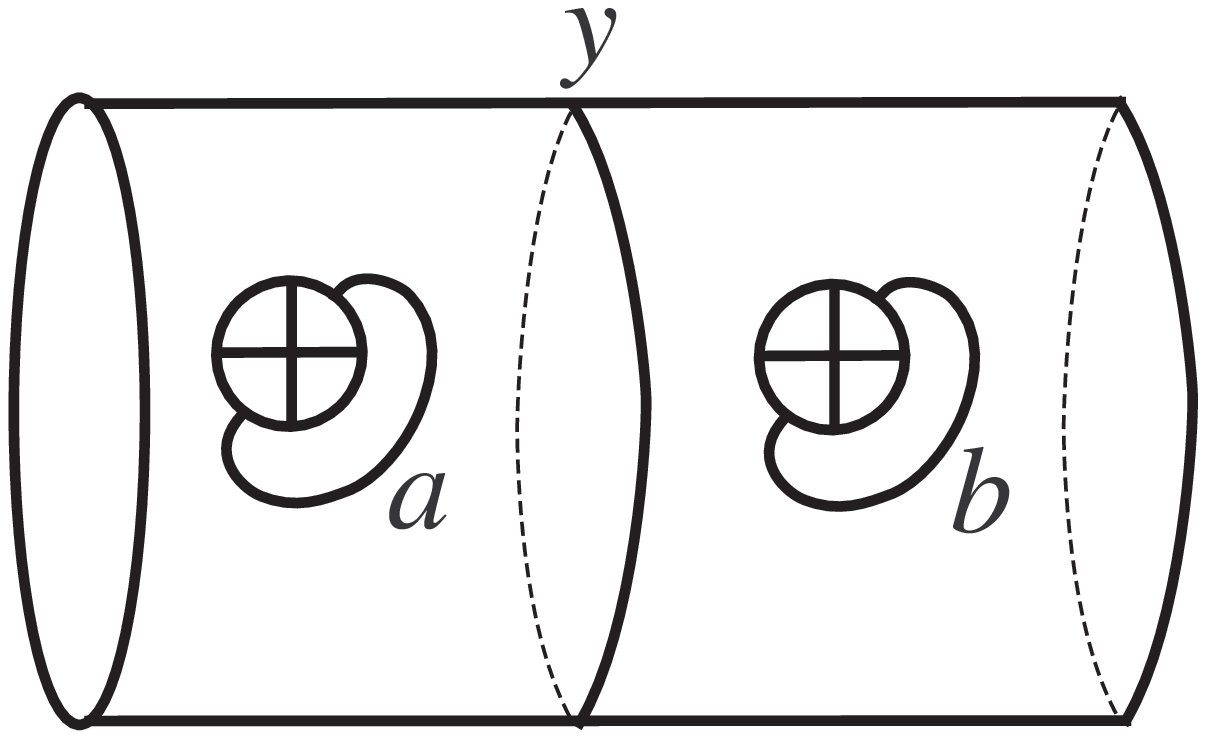}  \hspace{0.5cm} \epsfxsize=1.78in \epsfbox{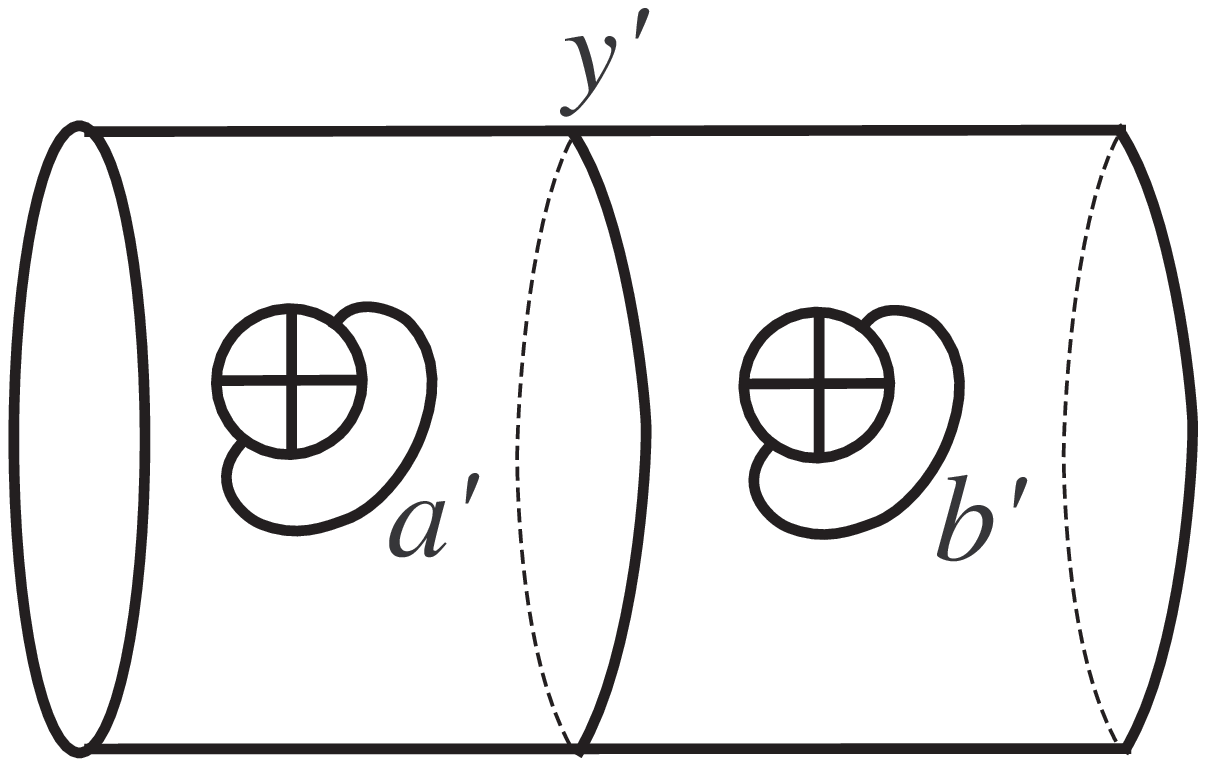}

(iii) \hspace{4.45cm} (iv)
\caption{Curve configurations I}
\label{Fig01}
\end{center}
\end{figure}

\begin{lemma}
\label{adjacent1} Suppose that $g + n \geq 5$. Let $\lambda : \mathcal{C}(N) \rightarrow \mathcal{C}(N)$ be an injective
simplicial map. Let $P$ be a pair of pants decomposition on $N$ which corresponds to a top dimensional maximal simplex in
$\mathcal{C}(N)$. Let $a, b \in P$ such that $a$ and $b$ are both 2-sided and $a$ is adjacent to $b$ w.r.t. $P$. There
exists $a' \in \lambda([a])$ and $b' \in \lambda([b])$ such that $a'$ is adjacent to $b'$ w.r.t. $P'$ where $P'$ is a
set of pairwise disjoint curves representing $\lambda([P])$ containing $a', b'$.
\end{lemma}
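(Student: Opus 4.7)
The strategy mirrors that of Lemma~\ref{nonadjacent}, but now a single auxiliary curve is constructed whose image under $\lambda$ is forced to witness the adjacency of $a'$ and $b'$ in $P'$. First, because $P$ is top-dimensional and both $a$ and $b$ are $2$-sided, Lemma~\ref{tp} implies that $a$ and $b$ are separating; adjacency provides a pair of pants $Q$ whose boundary contains $a$, $b$, and a third curve $e$ (either a boundary of $N$ or another element of $P$). Write $Q_a$ and $Q_b$ for the pants of $P$ lying on the opposite sides of $a$ and $b$ from $Q$.

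Next, I would produce a simple closed curve $c$ on $N$ with the following properties: $c$ is disjoint from every element of $P\setminus\{a,b\}$; $c$ is not isotopic to any curve of $P$; and $c$ has small intersection with each of $a$ and $b$, in the sense of the definition preceding Lemma~\ref{smallint}. Concretely, $c$ would be drawn inside the subsurface $Q\cup Q_a\cup Q_b$ (glued along $a$ and $b$) so that it meets each of $a$ and $b$ minimally in two points, paralleling the construction of the auxiliary curves $c,d$ used in the proof of Lemma~\ref{nonadjacent} (compare Figure~\ref{Figure-adj}). The small-intersection conditions would be verified by explicitly swapping $a$ (respectively $b$) for $c$ inside a top-dimensional pants decomposition adapted to the local picture; the hypothesis $g+n\geq 5$ supplies the global complexity needed for these top-dimensional completions to exist.

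With such a $c$ in hand, Lemma~\ref{smallint} gives $i(\lambda([a]),\lambda([c]))\neq 0$ and $i(\lambda([b]),\lambda([c]))\neq 0$, while the disjointness of $c$ from $P\setminus\{a,b\}$ forces $i(\lambda([x]),\lambda([c]))=0$ for every $x\in P\setminus\{a,b\}$; injectivity of $\lambda$ also ensures that $\lambda([c])$ is distinct from $\lambda([x])$ for every $x\in P$. Fix pairwise disjoint representatives $P'$ of $\lambda([P])$ with $a'\in\lambda([a])$ and $b'\in\lambda([b])$, and let $c'$ realize $\lambda([c])$ in minimal position with respect to $P'$. Then $c'$ lies in the cut surface $N\setminus(P'\setminus\{a',b'\})$ and crosses both $a'$ and $b'$. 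Since $P'$ is itself top-dimensional (by injectivity of $\lambda$), the components of this cut surface are obtained by regluing pants of $P'$ along $a'$ and $b'$ only; the existence of such a $c'$ then forces $a'$ and $b'$ to lie on the boundary of a common pair of pants of $P'$, i.e., $a'$ is adjacent to $b'$ with respect to $P'$.

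The main obstacle is the construction of the curve $c$: various local configurations of the triple $(Q,Q_a,Q_b)$ must be treated separately, including the cases $Q_a=Q_b$, $e\in\partial N$, and the configurations where $Q_a$ or $Q_b$ is bounded by $1$-sided curves of $P$. In each sub-case one must exhibit the requisite top-dimensional pants decompositions certifying the small-intersection conditions, and this is where the hypothesis $g+n\geq 5$ enters. The closing topological step, by contrast, is a clean consequence of how simple closed curves sit inside top-dimensional pants decompositions.
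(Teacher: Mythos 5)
Your proposal follows essentially the same route as the paper: construct a single auxiliary curve $c$ having small intersection with both $a$ and $b$ and disjoint from every other curve of $P$, apply Lemma~\ref{smallint} to force $\lambda([c])$ to cross $\lambda([a])$ and $\lambda([b])$ while remaining disjoint from the rest of $\lambda([P])$, and conclude that $a'$ and $b'$ must bound a common pair of pants of $P'$. The paper handles the construction of $c$ by exhibiting it in a representative configuration (Figure~\ref{Figure-adj}) rather than enumerating sub-cases, but the argument is the same.
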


\begin{proof} Let $a, b \in P$ such that $a$ and $b$ are both 2-sided and $a$ is adjacent to $b$ w.r.t. $P$. We can choose
a simple closed curve $c$ on $N$ such that $a$ and $c$ have small intersection, $b$ and $c$ have small intersection,
and $c$ doesn't intersect any other curve in $P$. In Figure \ref{Figure-adj} (ii) we show how to choose the curve $c$ for
a special case on a closed surface of genus 7, where $a, b, P$ are shown in the figure. In Figure \ref{Figure-adj} (iii)
we show why $a$ and $c$ have small intersection. In Figure \ref{Figure-adj} (iv) we show why $b$ and $c$ have small intersection.
By using Lemma \ref{smallint}, we see that $\lambda([a])$ and $\lambda([c])$ have nonzero geometric intersection, and $\lambda([b])$
and $\lambda([c])$ have nonzero geometric intersection. We also see that $\lambda([c])$ has zero geometric intersection with all
the other elements in $\lambda([P])$. This implies that there exists $a' \in \lambda([a])$ and $b' \in \lambda([b])$ such that $a'$
is adjacent to $b'$ w.r.t. $P'$ where $P'$ is a set of pairwise disjoint curves representing $\lambda([P])$ containing $a', b'$.
\end{proof}\\

To show that adjacency is preserved for other types of simple closed curves we give the following lemmas.

\begin{lemma}
\label{adjacent2} Let $g \geq 2$. Suppose that $(g, n)= (3, 0)$ or $g + n \geq 4$. Let
$\lambda : \mathcal{C}(N) \rightarrow \mathcal{C}(N)$ be an injective simplicial map. Let $P$ be a pair of pants
decomposition on $N$ which corresponds to a top dimensional maximal simplex in $\mathcal{C}(N)$. Let $a, b \in P$
such that $a$ and $b$ are both 1-sided curves and $a$ is adjacent to $b$ w.r.t. $P$. There exists $a'  \in \lambda([a])$
and $b'  \in \lambda([b])$ such that $a'$ is adjacent to $b'$ w.r.t. $P'$ where $P'$ is a set of pairwise disjoint
curves representing $\lambda([P])$ containing $a', b'$.
\end{lemma}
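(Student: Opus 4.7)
The plan is to follow the template of Lemma \ref{adjacent1}, which handled the 2-sided adjacent case: I would produce an auxiliary simple closed curve $c$ that has small intersection with each of $a$ and $b$, is disjoint from every other curve of $P$, and is pairwise nonisotopic to the curves of $P$. Given such a $c$, Lemma \ref{smallint} forces $i(\lambda([a]),\lambda([c]))\neq 0$ and $i(\lambda([b]),\lambda([c]))\neq 0$, while $c$'s disjointness from $P\setminus\{a,b\}$ yields $i(\lambda([c]),\lambda([x]))=0$ for every $x\in P\setminus\{a,b\}$. Choosing a pairwise disjoint family $P'$ representing $\lambda([P])$ that contains $a'\in\lambda([a])$ and $b'\in\lambda([b])$, and taking a representative of $\lambda([c])$ with minimal intersection with $P'$, the fact that $\lambda([c])$ crosses both $a'$ and $b'$ but no other element of $P'$ is possible only if $a'$ and $b'$ lie on the boundary of a common pair of pants of $P'$, i.e.\ $a'$ is adjacent to $b'$ w.r.t.\ $P'$.

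To construct $c$, I would fix the pair of pants $Q$ of $P$ with $a$ and $b$ on its boundary and work inside a regular neighborhood $R$ of $Q\cup a\cup b$ in $N$. This $R$ is topologically a Klein bottle with one boundary component (the remaining boundary of $Q$), and contains no curve of $P$ in its interior other than $a$ and $b$. Writing $\nu(a)$ and $\nu(b)$ for the Möbius band neighborhoods of $a$ and $b$ inside $R$, I would take $c$ to be the concatenation of two embedded arcs in $Q$ running between $\partial\nu(a)$ and $\partial\nu(b)$ with two diameter arcs through $\nu(a)$ and $\nu(b)$, chosen so that $c$ meets each of $a$ and $b$ exactly once and is 2-sided, nontrivial, and nonisotopic to any curve of $P$. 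Small intersection of $c$ with $a$ (respectively with $b$) would be certified by exhibiting a top dimensional pants decomposition $P_a\ni a$ (respectively $P_b\ni b$) for which $(P_a\setminus\{a\})\cup\{c\}$ (respectively $(P_b\setminus\{b\})\cup\{c\}$) is again top dimensional, in the spirit of Figure \ref{Figure-adj}(ii)--(iv) used to prove Lemma \ref{adjacent1}.

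The main obstacle is producing $P_a$ and $P_b$ that attain the top dimension dictated by Lemma \ref{tp}. Since Lemma \ref{tp} permits only separating curves and 1-sided curves with nonorientable complement in a top dimensional maximal simplex, one must check that after replacing $a$ (or $b$) by $c$ the resulting family can be completed without introducing 2-sided nonseparating curves or 1-sided curves with orientable complement. A short case distinction according to the nature of the third boundary of $Q$ (whether it is a boundary component of $N$, shared with a separating curve of $P$, or shared with another 1-sided curve of $P$) handles all configurations allowed by the hypothesis $(g,n)=(3,0)$ or $g+n\geq 4$, in each case producing the required decompositions and closing out the argument.
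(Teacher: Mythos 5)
Your strategy---manufacture one auxiliary curve $c$ having small intersection with both $a$ and $b$ and disjoint from the rest of $P$, then invoke Lemma \ref{smallint} twice---cannot be carried out, and the obstruction is exactly the reason the paper does not argue this way for 1-sided curves. First, your specific $c$ fails immediately: a curve meeting the 1-sided curve $a$ exactly once has odd $\mathbb{Z}_2$-intersection with $a$, hence is not $\mathbb{Z}_2$-null-homologous, hence is nonseparating; and meeting each of $a$ and $b$ once makes it 2-sided, as you say. But by Lemma \ref{tp} a 2-sided nonseparating curve belongs to \emph{no} top dimensional pair of pants decomposition, so by the paper's definition it has small intersection with nothing, and Lemma \ref{smallint} tells you nothing about $\lambda([c])$. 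Second, no repair by a cleverer choice of $c$ is possible: \emph{no} curve whatsoever has small intersection with both of two disjoint 1-sided curves. Indeed, if $(P_a\setminus\{a\})\cup\{c\}$ is top dimensional then, since Lemma \ref{tp} fixes the number of 1-sided curves in a top dimensional decomposition at $g$, the replacement $c$ of the 1-sided curve $a$ must itself be 1-sided, and $c$ is disjoint from the other $g-1$ 1-sided curves $c_2,\dots,c_g$ of $P_a$. In $H_1(N;\mathbb{Z}_2)$ modulo the radical of the intersection pairing, the $g$ pairwise disjoint 1-sided curves of $P_a$ form an orthonormal basis, and the conditions $\hat{\imath}(c,c)=1$, $\hat{\imath}(c,c_j)=0$ for $j\ge 2$ force $[c]=[a]$ there. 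The same argument applied to $b$ forces $[c]=[b]$, contradicting $[a]\neq[b]$ (they are distinct members of an orthonormal set). So the auxiliary curve at the heart of your plan does not exist, and your case analysis on the third boundary of $Q$ never gets off the ground.

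The paper's actual proof of Lemma \ref{adjacent2} avoids auxiliary intersecting curves entirely. After disposing of $(g,n)=(3,0)$ by counting, it locates the separating curve $x\in P$ bounding the one-holed Klein bottle that contains $a$ and $b$, uses preservation of nonadjacency (Lemma \ref{nonadjacent}) together with Lemma \ref{tp} to force $x'$ to be separating with $a'$ and $b'$ trapped alone on one side of it---which makes them adjacent---and handles $(g,n)=(2,2)$ separately via Lemma \ref{1-sd} plus a finiteness argument (infinitely many nonisotopic 1-sided curves disjoint from $x$ versus finitely many curves disjoint from $x'$ in the forbidden configuration). If you want to salvage an intersection-based argument, you would need two different auxiliary curves, one paired with $a$ and one with $b$, which is essentially what the paper's Lemma \ref{nonadjacent} mechanism packages; a single curve cannot do both jobs.
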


\begin{proof} Suppose that $(g, n)= (3, 0)$ or $g + n \geq 4$. Let $a, b \in P$ such that $a$ and $b$ are both 1-sided,
and $a$ is adjacent to $b$ w.r.t. $P$. Let $P'$ be a set of pairwise disjoint curves representing $\lambda([P])$.

The statement is easy to see if $(g, n)= (3, 0)$, as there are only three 1-sided curves in $P$. Since $\lambda$ is injective,
there are three curves in $P'$. They are all 1-sided and adjacent to each other w.r.t. $P'$.

Assume that $(g, n) \neq (3, 0)$. There exists $x \in P$ such that $a, b, x$ is as
in Figure \ref{Fig100} (i). Let $a' \in \lambda([a])$, $b' \in \lambda([b])$,
$x' \in \lambda([x])$ such that $a', b', x' \in P'$.

Suppose $g + n \geq 4$ and $(g, n) \neq (2, 2)$. There is at least one other curve in $P$ which is
on the other side of $x$, and it is adjacent to $x$ w.r.t. $P$.  Since $x$ is a separating curve which has curves that are
adjacent to it on both sides, and nonadjacency is preserved by Lemma \ref{nonadjacent}, $x'$ can't be a 1-sided curve,
otherwise some curves that are not adjacent w.r.t. $P$ would have to be adjacent w.r.t. $P'$, and this would give a contradiction.
By Lemma \ref{tp}, the only curves in a top dimensional simplex are either 1-sided or separating curves. So, since $x'$ is not 1-sided,
$x'$ is a separating curve. Since $a, b$ are not adjacent to any other curve then $x$, we see that $a', b'$ are not
adjacent to any other curve then $x'$. This implies that $a', b'$ has to be on the same side of $x'$, and there shouldn't
be any other curve coming from $P'$ on that side. This implies that $a'$ and $b'$ have to be adjacent w.r.t. $P'$.

Suppose $(g, n) = (2, 2)$, then by Lemma \ref{1-sd}, we know that both $a'$ and $b'$ are 1-sided. Then, $x'$ is a separating curve. Suppose
$a'$ and $b'$ are not adjacent w.r.t. $P'$. Then, $a', x', b'$ are as shown in Figure \ref{Fig100} (ii). This gives a contradiction as $\lambda$ is injective, and there are infinitely many nonisotopic 1-sided simple closed curves that are disjoint from $x$ on $N$, so their images should have nonisotopic representatives disjoint from $x'$, but there are only finitely many nonisotopic simple closed curves on $N$ disjoint from $x'$. Hence, $a'$ and $b'$ have to be adjacent w.r.t. $P'$.\end{proof}

\begin{figure}
\begin{center}
\epsfxsize=1.7in \epsfbox{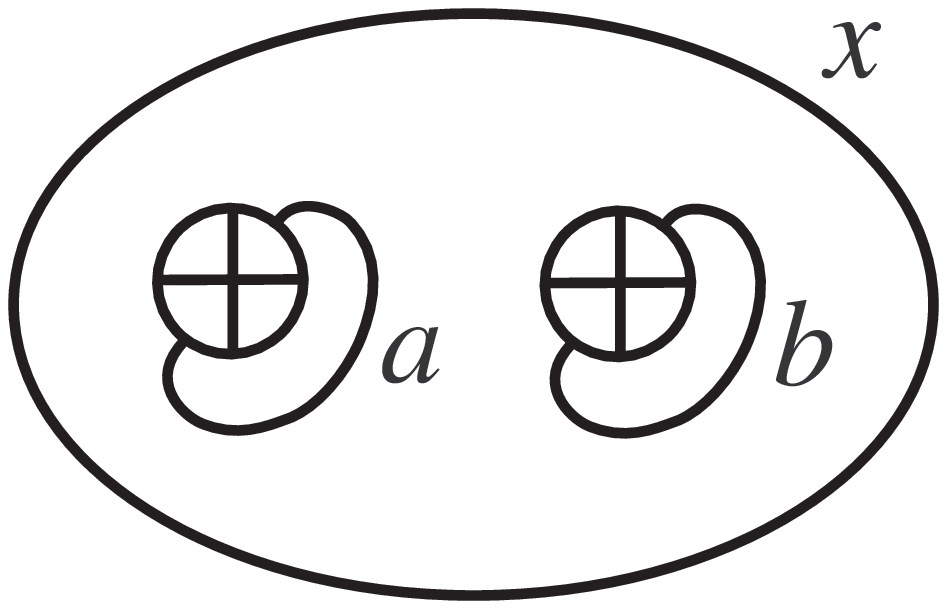} \hspace{0.1in} \epsfxsize=1.8in \epsfbox{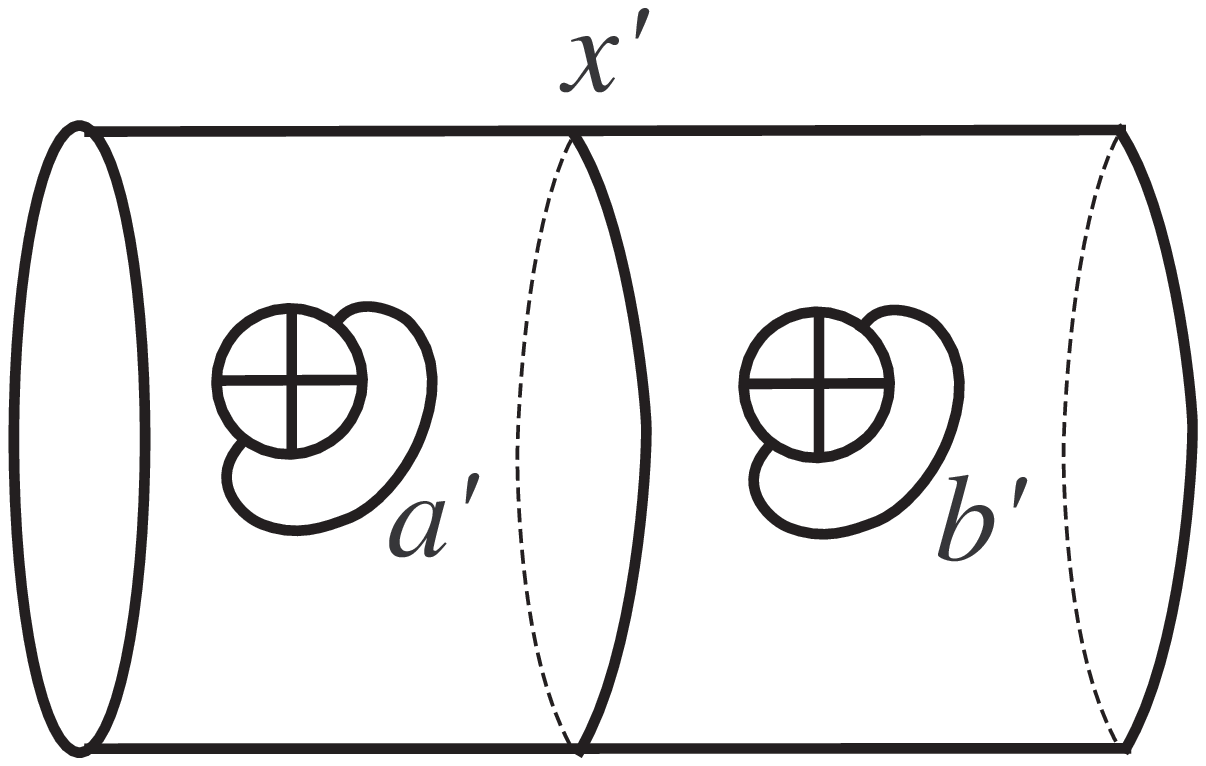} \hspace{0.1in}
\epsfxsize=1.95in \epsfbox{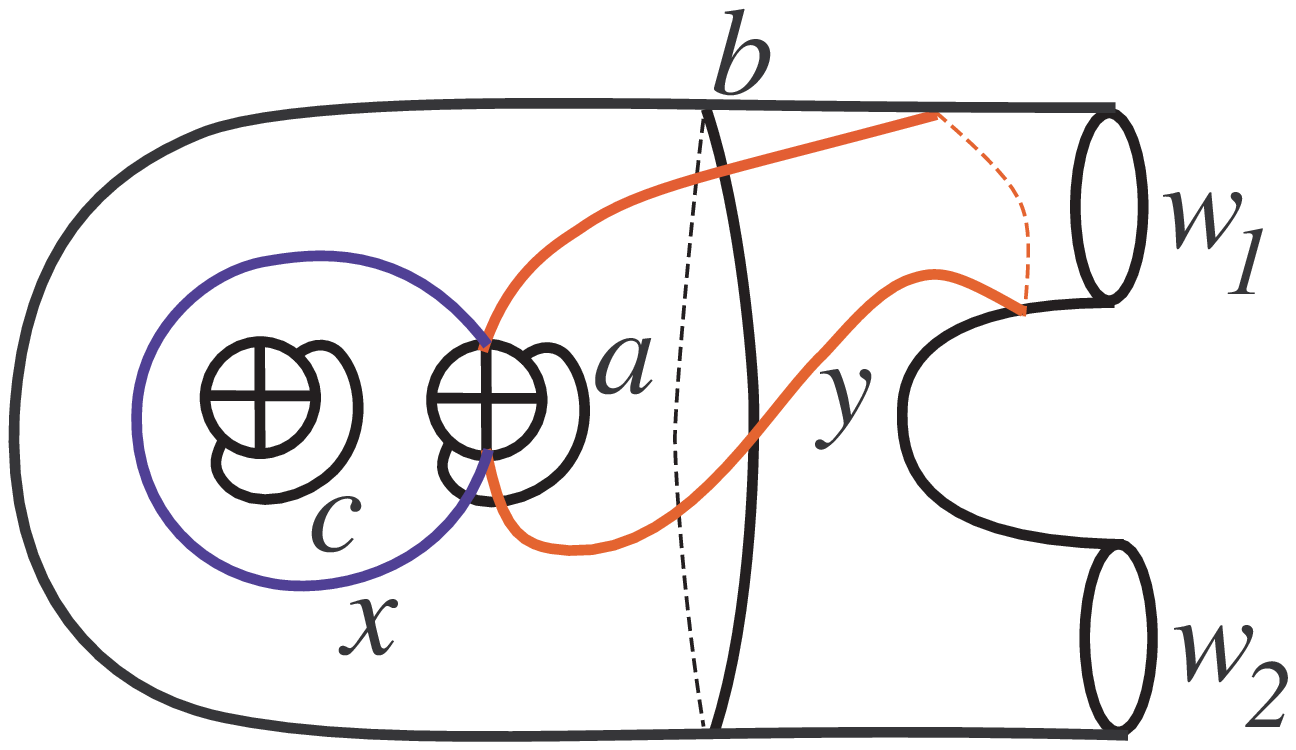}

 \hspace{0.3cm} (i) \hspace{3.92cm} (ii) \hspace{4.6cm} (iii)  \vspace{0.2in}

\epsfxsize=2.632in \epsfbox{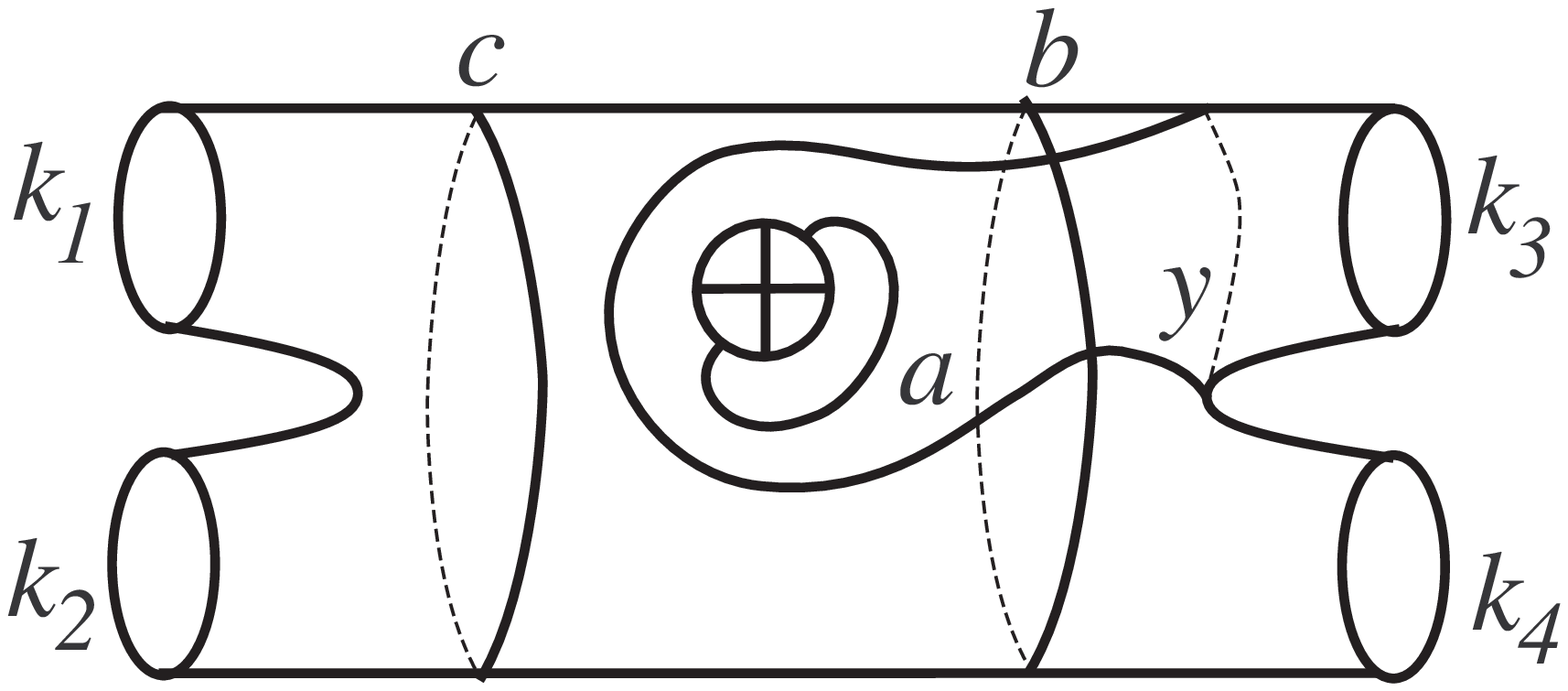} \hspace{0.3in}
\epsfxsize=2.3in \epsfbox{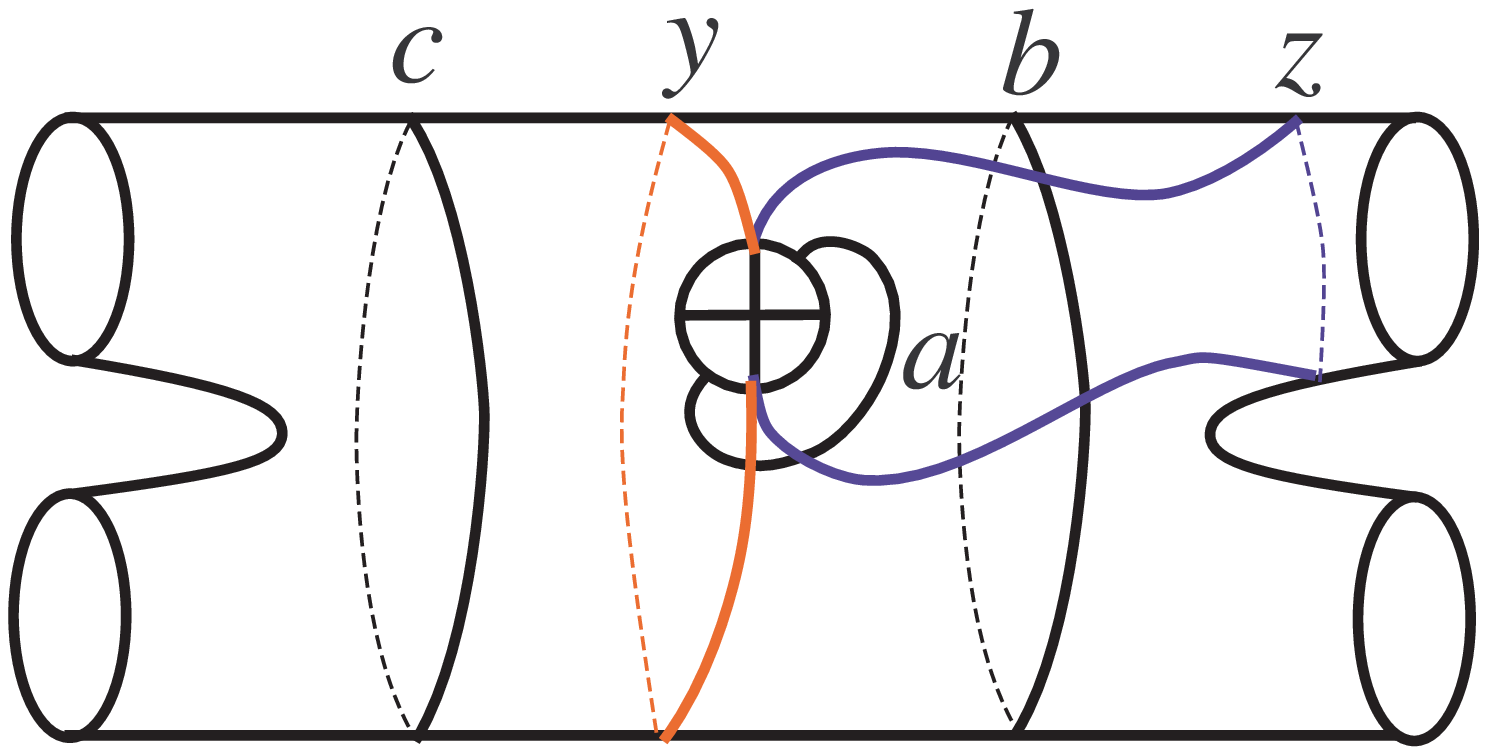}

\hspace{0.38cm} (iv) \hspace{6.56cm} (v) \vspace{0.2in}

\caption{Curve configurations II}
\label{Fig100}
\end{center}
\end{figure}

\begin{lemma}
\label{adjacent3} Suppose that $g + n \geq 4$. Let $\lambda : \mathcal{C}(N) \rightarrow \mathcal{C}(N)$ be an
injective simplicial map. Let $P$ be a pair of pants decomposition on $N$ which corresponds to a top dimensional maximal
simplex in $\mathcal{C}(N)$. Let $a, b \in P$ such that $a$ is 1-sided, $b$ is 2-sided and $a$ is adjacent to $b$ w.r.t.
$P$. There exists $a'  \in \lambda([a])$ and $b'  \in \lambda([b])$ such that $a'$ is adjacent
to $b'$ w.r.t. $P'$ where $P'$ is a set of pairwise disjoint curves representing $\lambda([P])$ containing $a', b'$.
\end{lemma}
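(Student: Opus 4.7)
The plan is to adapt the witness-curve strategy of Lemma \ref{adjacent1} to the mixed case where $a$ is 1-sided and $b$ is 2-sided. By Lemma \ref{tp}, the 2-sided curve $b$ is separating. An immediate obstruction is that a single curve cannot have small intersection with both $a$ and $b$ simultaneously, since the definition of small intersection forces any substitute to preserve the 1-sided versus separating role of the curve it replaces (this follows from the counts in Lemma \ref{tp}). I therefore combine a single witness curve for $b$ with the nonadjacency preservation of Lemma \ref{nonadjacent}, completed by a case analysis.

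Let $Q$ be the pair of pants in $N_P$ whose boundary contains $a$ and $b$, write $z$ for the third boundary curve of $Q$ (either an element of $P$ or a boundary component of $N$), and let $Q'$ denote the pair of pants on the opposite side of $b$ from $Q$. First, I would construct a 2-sided separating simple closed curve $c$ in a neighborhood of $Q \cup Q'$ such that $c$ has small intersection with $b$, is disjoint from every curve of $P \setminus \{b\}$, and is nonisotopic to every element of $P$. By Lemma \ref{smallint}, $\lambda([c])$ has nonzero intersection with $\lambda([b])$ and is disjoint from every element of $\lambda([P]) \setminus \{\lambda([b])\}$; in particular $\lambda([c])$ is disjoint from $\lambda([a])$. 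Next, applying Lemma \ref{nonadjacent} to each $w \in P$ not adjacent to $a$, and using that the only curves of $P$ adjacent to $a$ are $b$ and possibly $z$, the boundary circles of any pair of pants $R$ of the $\lambda([P])$-decomposition meeting $\lambda([a])$ lie in $\{\lambda([a]), \lambda([b]), \lambda([z])\} \cup \partial N$.

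Suppose for contradiction that $\lambda([a])$ and $\lambda([b])$ are not adjacent in $\lambda([P])$. Then every such $R$ has its boundary in $\{\lambda([a]), \lambda([z])\} \cup \partial N$. I would then carry out a case analysis on the type of $z$ (1-sided, 2-sided separating, or a boundary component of $N$) and on whether $\lambda([a])$ is 1-sided or 2-sided separating. In each configuration the constraint that $\lambda([c])$ crosses $\lambda([b])$ while being disjoint from every other element of $\lambda([P])$, together with the counts in Lemmas \ref{tp} and \ref{dim} and the severely restricted structure of $R$, produces a contradiction: either the subsurface of $N$ that must carry $\lambda([c])$ forces an unwanted intersection with $\lambda([a])$ or $\lambda([z])$, or the enumerated structure of $R$ is not realizable in any top dimensional pants decomposition of $N$ with the required 1-sided/separating counts.

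The hard part will be the case analysis and the explicit construction of $c$ on the smallest surfaces, where $N$ offers very little room for auxiliary curves. For the borderline case $(g, n) = (2, 2)$, I would invoke Lemma \ref{1-sd} to pin down $\lambda([a])$ as 1-sided and finish by direct inspection of the finite list of top dimensional pants decompositions of $N$, in the spirit of the corresponding argument in Lemma \ref{adjacent2}.
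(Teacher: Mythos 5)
Your preliminary observations are sound: you correctly identify why the single-witness trick of Lemma \ref{adjacent1} breaks down here (a curve having small intersection with $a$ must itself be 1-sided and one having small intersection with $b$ must be separating, by the counts in Lemma \ref{tp}), and you correctly use Lemma \ref{nonadjacent} to confine the possible $P'$-neighbors of $\lambda([a])$ to $\{\lambda([b]),\lambda([z])\}\cup\partial N$. But the engine you propose for the contradiction --- a single curve $c$ with small intersection with $b$ and disjoint from everything else --- is too weak. The constraint it yields is only that some curve crosses $b'$ while missing every other element of $P'$, i.e.\ that such a curve lives in the union of the (at most two) pairs of pants of $P'$ incident to $b'$. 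That is satisfied in essentially every configuration, including precisely the problematic ones: when $b'$ is separating with the third boundary curve $c'$ of the relevant pair of pants adjacent to $b'$ and $a'$ adjacent only to $c'$, the two pants glued along $b'$ still carry a curve crossing $b'$ and disjoint from the rest of $P'$. These bad configurations are realizable as genuine top dimensional pants decompositions, so they cannot be excluded by ``non-realizability'' either. Your concluding dichotomy is therefore not a proof but a restatement of what remains to be shown.

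The paper rules out exactly these configurations with three tools absent from your proposal: (a) \emph{two} witness curves $x,y$ having small intersection with the 1-sided curve $a$ itself and disjoint from $c$; their images must be nonisotopic, cross $a'$, and miss $c'$, and in the bad configuration the subsurface available for them is too small to contain two such nonisotopic classes; (b) a global counting argument: completing $a,b,c$ to a top dimensional decomposition $W$ in which all but at most one separating curve is adjacent to curves on both sides forces at least $g-1$ of the $g$ 1-sided curves of $W$ to have 1-sided images, so $a'$ and $c'$ cannot both be separating; and (c) an elementary move replacing $b$ by a curve $y$ to obtain a second top dimensional decomposition $W=(P\setminus\{b\})\cup\{y\}$ in which $a$ and $c$ are \emph{not} adjacent, contradicting the forced adjacency of $a'$ and $c'$ in the bad configuration. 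Without some substitute for these arguments, the case analysis you defer to cannot close, so the proposal has a genuine gap at its core.
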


\begin{proof} Suppose that $g + n \geq 4$. Let $P$ be a pair of pants decomposition on $N$
which corresponds to a top dimensional maximal simplex in $\mathcal{C}(N)$. Let $a, b \in P$ such that
$a$ is 1-sided, $b$ is 2-sided, and $a$ is adjacent to $b$ w.r.t. $P$. Let $P'$ be a set of pairwise disjoint
curves representing $\lambda([P])$. Let $a' \in \lambda([a]), b' \in \lambda([b])$ such that $a', b' \in P'$.
The statement is easy to see in $(g, n) = (1,3)$ case as there are only two curves in $P$ if $(g, n) = (1,3)$.

Assume that $(g, n) \neq (1,3)$. By Lemma \ref{tp}, $b$ is a separating curve. Suppose $a$ is the only curve that is adjacent
to $b$ on one side of $b$. Since $g + n \geq 4$ and $(g, n) \neq (1, 3)$ there is at least one other curve in $P$ that is on
the other side of $b$. Since nonadjacency is preserved, we see that $b'$ should be a separating curve, and $a'$ has to be on
one side of $a'$ and there shouldn't be any other curve coming from $P'$ on that side. This implies that $a'$ is adjacent to $b'$. In the
other cases, by using Lemma \ref{tp}, we see that there exists $c \in P$ such that $a, b, c$ are as shown in Figure \ref{Fig100} (iii) or (iv).
Let $c' \in \lambda([c])$ such that $a', b', c'$ have minimal intersection. We note that the curves $w_1, w_2, k_1, k_2, k_3, k_4$
that we see in these figures could be representing boundary components of $N$ or separating
curves in $P$ or they could bound Mobius bands depending on the cases we will consider below.

\begin{figure}
\begin{center}
\epsfxsize=1.8in \epsfbox{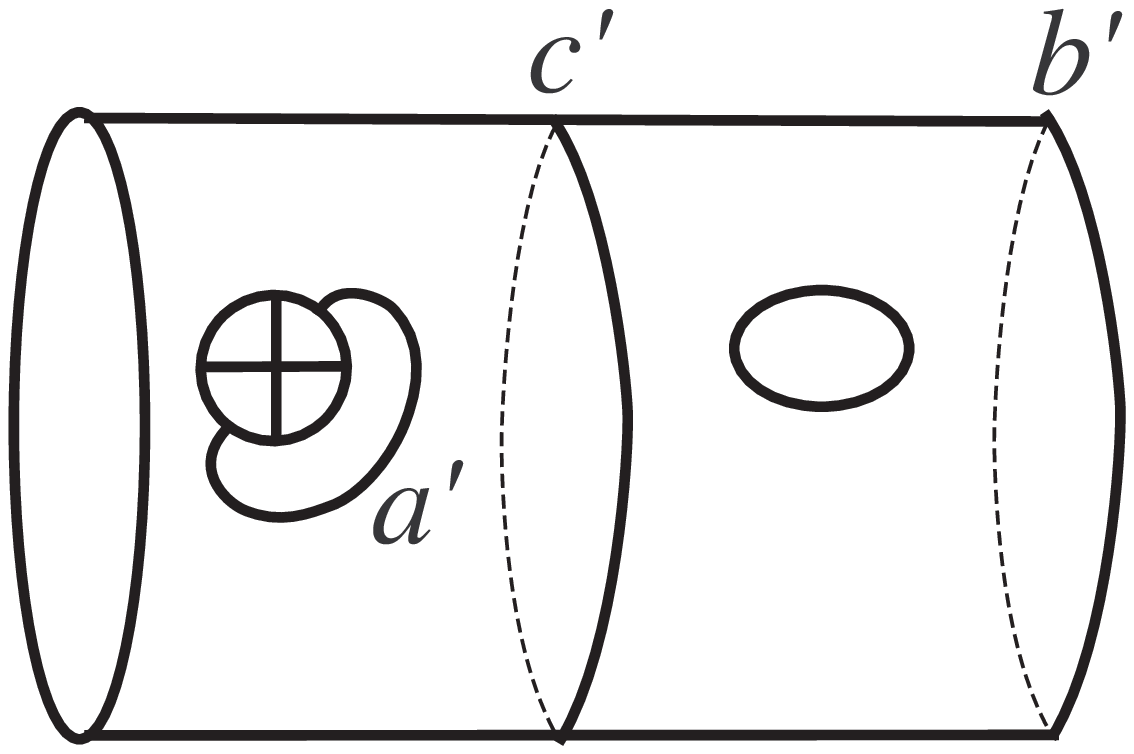} \hspace{0.3in}
\epsfxsize=2.3in \epsfbox{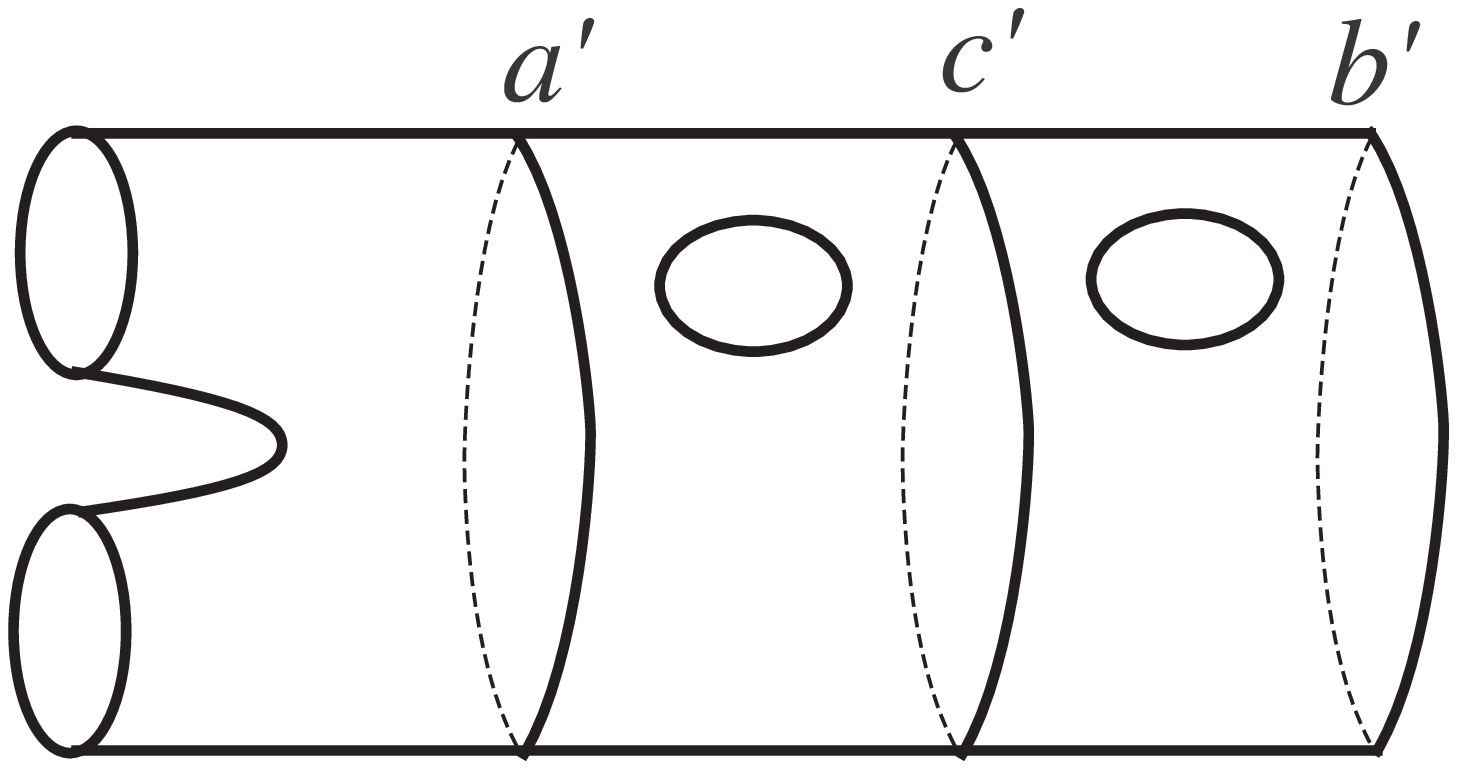}

\hspace{-0.35cm} (i) \hspace{5.6cm} (ii) \vspace{0.2in}

\epsfxsize=3.7in \epsfbox{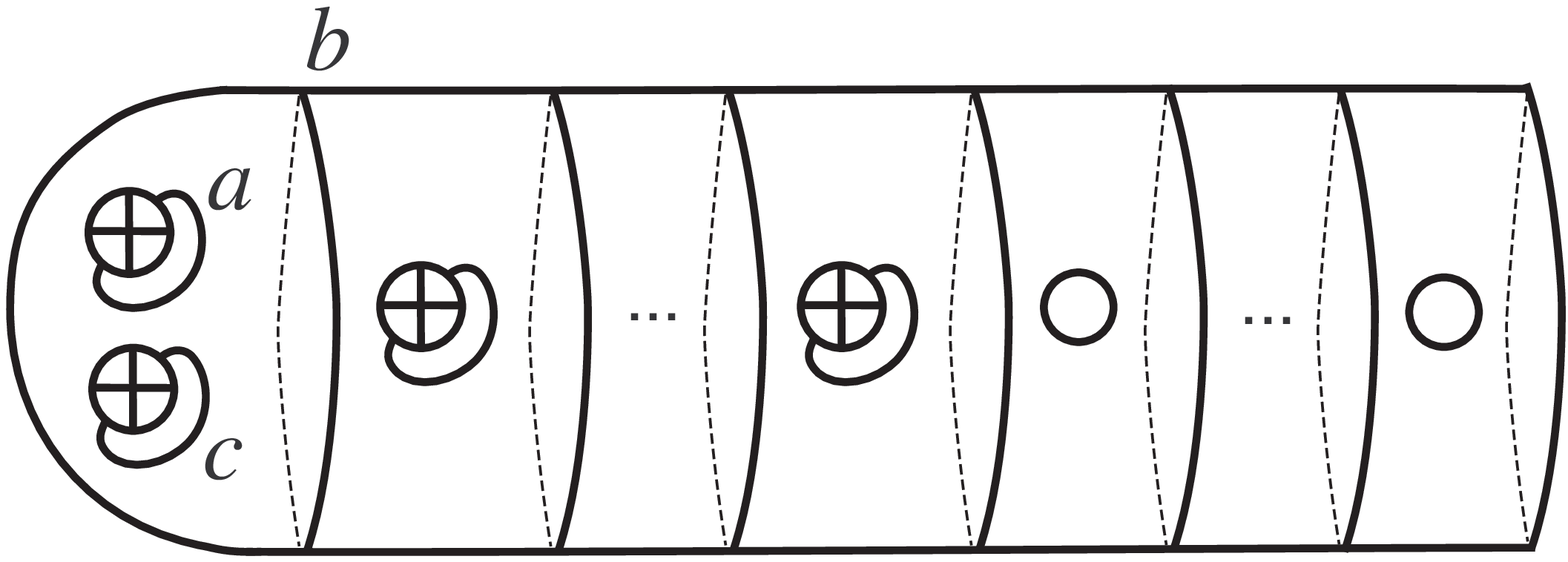}

(iii)
\caption{Curve configurations III}
\label{Fig101}
\end{center}
\end{figure}

{\bf Case 1:} Suppose $a, b, c$ are as shown in Figure \ref{Fig100} (iii). If $(g, n) = (2, 2)$, then by Lemma \ref{1-sd} and the previous
part, we see that $a', c'$ have to be both 1-sided and adjacent to each other w.r.t. $P'$. This implies that $a', b'$ also have to
be adjacent to each other w.r.t. $P'$. Suppose $(g, n) \neq (2, 2)$.
Then $b$ has curves that are adjacent to it on both sides. Since nonadjacency is preserved,
$b'$ is has to be a separating curve, and $a'$ and $c'$ has to be on the same side of $b'$ and there shouldn't be any other curve on that side coming from $P'$. Suppose $a'$ is not adjacent to $b'$. Then, $c'$ has to be adjacent to $b'$ w.r.t. $P'$. This implies that $a', b', c'$ are as shown in Figure \ref{Fig101} (i) or (ii). If $a', b', c'$ are as shown in Figure \ref{Fig101} (i),
we get a contradiction by using the curves $x, y$ shown in Figure \ref{Fig100} (iii). Because $x, y, a$ are pairwise nonisotopic,
and $x, y$ have small intersection with $a$, and they are both disjoint from $c$. Let $x' \in \lambda([x])$ and $y' \in \lambda([y])$ such that $a', x', y', c'$ have minimal intersection. Since $\lambda$ is injective, $a', x', y', c'$ are pairwise nonisotopic.
By Lemma \ref{smallint}, each of $x'$ and $y'$ has to intersect $a'$ essentially and should be disjoint from $c'$,
that gives a contradiction. Suppose $a', b', c'$ are as shown in Figure \ref{Fig101} (ii). We can complete
$a, b, c$ to a top dimensional pants decomposition $W$ (see Figure \ref{Fig101} (iii)), such that there are $g$ 1-sided curves, and the rest are separating curves and there exists at most one separating curve, $v$, in $W$, such that $v$ is not adjacent to any curve in $W$ on one side of it.
So, except possibly for one separating curve, separating curves in $W$ are adjacent to at least one curve on both sides w.r.t. $W$. Since nonadjacency is preserved, the image of all the separating curves in $W$ except possibly the image of $v$ should have separating representatives. This implies that the image of $g$ 1-sided curves in $W$ has to have at least $g-1$ 1-sided curves as representatives, by Lemma \ref{tp}. Hence, in our case in Figure \ref{Fig101} (ii), we get a contradiction because both $a'$ and $c'$ can't be separating curves at the same time, as $a$ and $c$ are 1-sided curves.
Hence, $a'$ has to be adjacent to $b'$.

\begin{figure}
\begin{center}
\epsfxsize=1.8in \epsfbox{Figure-623a.eps} \hspace{0.3in}
\epsfxsize=2.3in \epsfbox{Figure-56.eps}

\hspace{-0.4cm} (i) \hspace{5.6cm} (ii) \vspace{0.2in}

\epsfxsize=2.3in \epsfbox{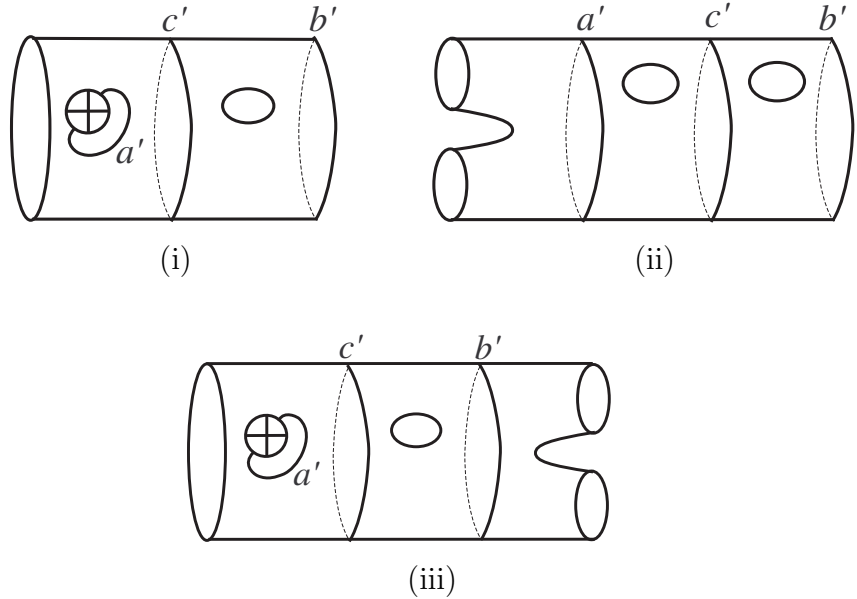} \hspace{0.3in}

(iii)
\caption{Curve configurations IV}
\label{Fig102}
\end{center}
\end{figure}

{\bf Case 2:} Suppose $a, b, c$ are as shown in Figure \ref{Fig100} (iv). Suppose there is a curve, $x \in P$, on the side of $b$ which doesn't
contain $a$. By using Lemma \ref{tp}, we see that $b'$ is a separating curve as nonadjacency is preserved and $b$ has curves that are adjacent to it on both sides, and $a'$ and $c'$ are on the same side of $b'$. Since $a$ is not adjacent to any other curve then $b$ and $c$, we see that $a'$ will not be adjacent to any other curve then $b'$ and
$c'$. If there is a third curve of $P$ which is on the same side of $b$ as $a$ and $c$, that will imply that $a'$ has to be adjacent to $b'$ since nonadjacency is preserved. Suppose
there is no other curve of $P$ on the side of $b$ which contains $a$ and $c$. We will see that $a'$ is adjacent to $b'$ w.r.t $P'$ as follows:
Suppose $a'$ is not adjacent to $b'$ w.r.t. $P'$. Then $c'$ has to be adjacent to $b'$ and $a'$, and $a', b', c'$ have to be as shown in Figure \ref{Fig102} (i) or \ref{Fig102} (ii). In both cases we get a contradiction as follows: By changing the curve $b$ to $y$ as shown in Figure \ref{Fig100} (iv) we get another top dimensional pants decomposition, say $W$, such that $W = (P \setminus \{b\}) \cup \{y\}$. We see that $y$ only has small intersection with $b$, and $y$ is disjoint from all the other curves in $P$, and $a$ and $c$ are not adjacent w.r.t. $W$. But if $a', b', c'$ are as shown in Figure \ref{Fig102} (i) or \ref{Fig102} (ii), $a'$ and $c'$ have to be adjacent to each other w.r.t. $W'$ that gives a contradiction. Hence, $a'$ is adjacent to $b'$. w.r.t. $P'$.

Suppose there is a curve in $P$ on the side of $c$ which doesn't contain $a$, and the side of $b$ which doesn't contain $a$ doesn't have
any curves from $P$. Then, by using nonadjacency is preserved we see that $c'$ is a separating curve, $a', b'$ are on the same side of $c'$,
and there are not any other curves of $P'$ on the side of $c'$ containing $a', b'$. This implies that $a'$ and $b'$ are adjacent w.r.t. $P'$.

Suppose the sides of each of $b$ and $c$ which doesn't contain $a$, don't have any essential curves in $P$. Then we have $(g, n)= (1, 4)$,
and $a, b, c$ are as shown in Figure \ref{Fig100} (v). In this case, we know that $a'$ is 1-sided and $b', c'$ are separating curves by Lemma \ref{1-sd}. Suppose $a'$ and $b'$ are not adjacent w.r.t. $P'$. Then, $a', b', c'$ are as shown Figure \ref{Fig102} (iii). In such a case by considering curves $z, y$ given in Figure \ref{Fig100} (v) we get a contradiction as follows: $a, c, y, z$ are pairwise nonisotopic and each of $y, z$ has small intersection with $a$, and they are disjoint from $c$. Let $y', z'$ be representatives of $\lambda([y]), \lambda([z])$ respectively such that $a', c', y', z'$ have minimal intersection. Each of $a', c', y', z'$ are pairwise nonisotopic and each of $y', z'$ intersects $a'$, and is disjoint from $c'$. This gives a contradiction. So, $a'$ is adjacent to $b'$. w.r.t. $P'$.\end{proof}\\

By Lemma \ref{tp}, the curves in a pants decomposition that corresponds to a top dimensional simplex in $\mathcal{C}(N)$ are either separating or 1-sided with nonorientable complement. So,
combining our results in Lemma \ref{adjacent1}, Lemma \ref{adjacent2} and Lemma \ref{adjacent3} we get the following:

\begin{lemma}
\label{adjacent} Suppose that $(g, n)= (3, 0)$ or $g + n \geq 4$. Let $\lambda : \mathcal{C}(N) \rightarrow \mathcal{C}(N)$
be an injective simplicial map. Let $P$ be a pair of pants decomposition on $N$ which corresponds to a top dimensional maximal
simplex in $\mathcal{C}(N)$. Let $a, b \in P$ such that $a$ is adjacent to $b$ w.r.t. $P$. There exists $a'  \in \lambda([a])$
and $b' \in \lambda([b])$ such that $a'$ is adjacent to $b'$ w.r.t. $P'$ where $P'$ is a set of pairwise disjoint curves
representing $\lambda([P])$ containing $a', b'$.\end{lemma}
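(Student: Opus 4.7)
The plan is a straightforward case analysis built on Lemma \ref{tp}. Under the standing hypothesis $(g,n)=(3,0)$ or $g+n\geq 4$, that lemma guarantees that every element of $P$ is either a separating (hence 2-sided) curve or a 1-sided curve whose complement is nonorientable. Consequently the adjacent pair $\{a,b\}$ must fall into one of three mutually exclusive types: (a) both 1-sided, (b) one 1-sided and one 2-sided, or (c) both 2-sided. My strategy is simply to dispatch each type by quoting one of the three preceding adjacency lemmas, and then to check that their hypotheses are actually met in every case permitted by our current hypothesis.

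Type (a) is immediate from Lemma \ref{adjacent2}, whose hypothesis $(g,n)=(3,0)$ or $g+n\geq 4$ coincides with ours. Type (b) is handled by Lemma \ref{adjacent3}, whose hypothesis is $g+n\geq 4$; this is automatically satisfied in type (b), because the only other possibility $(g,n)=(3,0)$ forces (again by Lemma \ref{tp}) every curve in $P$ to be 1-sided, ruling out a mixed pair. Type (c) needs Lemma \ref{adjacent1}, whose hypothesis is the stronger $g+n\geq 5$, and this is the only point in the argument that requires any real attention.

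To verify that type (c) only occurs when $g+n\geq 5$, I use the count given in Lemma \ref{tp}: the number of separating curves in a top dimensional pants decomposition equals $2r+n-2$ when $g=2r+1$ and $2r+n-3$ when $g=2r$. Plugging in $(g,n)=(3,0)$ yields $0$, and plugging in each of $(1,3),(2,2),(3,1),(4,0)$ yields at most $1$. In none of these boundary cases can $P$ contain two distinct separating curves, so $a$ and $b$ cannot both be 2-sided there; type (c) therefore forces $g+n\geq 5$, precisely the regime where Lemma \ref{adjacent1} applies. I expect no real obstacle: the geometric work has already been done in the three preceding lemmas, and this statement is essentially the bookkeeping that glues them together via the classification in Lemma \ref{tp}.
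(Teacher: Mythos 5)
Your proof is correct and follows essentially the same route as the paper, which likewise obtains this lemma by combining Lemmas \ref{adjacent1}, \ref{adjacent2} and \ref{adjacent3} via the classification of the curves in a top dimensional pants decomposition given in Lemma \ref{tp}. Your explicit verification that two adjacent separating curves cannot occur in the boundary cases $(3,0)$, $(1,3)$, $(2,2)$, $(3,1)$, $(4,0)$ --- so that the stronger hypothesis $g+n\geq 5$ of Lemma \ref{adjacent1} is never actually needed when $g+n=4$ --- is a detail the paper leaves implicit, and it is worth recording.
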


\begin{lemma}
\label{1-sided-cn-2} Let $g \geq 2$. Suppose that $(g, n) = (3, 0)$ or $g+n \geq 4$. Let
$\lambda : \mathcal{C}(N) \rightarrow \mathcal{C}(N)$ be an injective simplicial map. If $a$ is a 1-sided simple
closed curve on $N$ whose complement is nonorientable, then $\lambda([a])$ is the isotopy class of a 1-sided simple
closed curve whose complement is nonorientable.
\end{lemma}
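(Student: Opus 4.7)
The plan is to complete $a$ to a top-dimensional pair-of-pants decomposition of $N$ and transport combinatorial information through $\lambda$ using the adjacency preservation already established. First I complete $a$ to a pair of pants decomposition $P$ whose isotopy classes span a top-dimensional maximal simplex in $\mathcal{C}(N)$; this is possible since by Lemma \ref{tp} such a simplex is required to contain exactly $g$ 1-sided curves with nonorientable complement, and any given such $a$ can be taken to be one of them. Let $P'$ be a set of pairwise disjoint representatives of $\lambda([P])$. Injectivity of $\lambda$ gives $|P'|=|P|$, so $P'$ corresponds to a top-dimensional maximal simplex, and applying Lemma \ref{tp} to $P'$ shows every curve of $P'$ is either separating or 1-sided with nonorientable complement, with again exactly $g$ 1-sided curves. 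Let $a' \in P'$ denote the representative of $\lambda([a])$; the goal is to show that $a'$ is 1-sided with nonorientable complement.

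The special cases are quick. For $(g,n)=(1,4)$ and $(2,2)$ the conclusion is Lemma \ref{1-sd}. For $(g,n)=(3,0)$ Lemma \ref{tp} forces every curve of $P$ and of $P'$ to be 1-sided (the separating count is zero), so $a'$ is automatic. In the remaining cases, satisfying $g+n\geq 4$ but not of these special types, I argue by contradiction: suppose $a'$ is separating. Since $a$ is 1-sided, it lies on the boundary of a single pants of $P$ and is therefore adjacent (w.r.t.\ $P$) to at most two other curves of $P$; by Lemmas \ref{adjacent} and \ref{nonadjacent}, the adjacency and nonadjacency relations of $a$ inside $P$ are carried intact to $a'$ inside $P'$. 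A separating $a'$, by contrast, bounds two distinct pants of $P'$, each contributing up to two further boundary curves of $P'$ on opposite sides of $a'$.

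To extract a contradiction, I would choose the completion $P$ so that the pants containing $a$ admits an auxiliary family of test curves $c_i$ with small intersection (each meeting $a$ essentially but disjoint from $P \setminus \{a\}$), in the spirit of the configurations used in Lemmas \ref{adjacent2}--\ref{adjacent3}. Lemma \ref{smallint} then forces each $\lambda([c_i])$ to meet $a'$ essentially while being disjoint from $P' \setminus \{a'\}$, so the $\lambda([c_i])$ all sit in a fixed pants of $P'$ adjacent to $a'$ and cross $a'$. Since the nonorientable complement of $a$ in a small subsurface of $N$ supports infinitely many nonisotopic curves of this form, injectivity of $\lambda$ would produce infinitely many pairwise nonisotopic $\lambda([c_i])$ confined to a single pants, contradicting finiteness of such isotopy classes, in the same spirit as the closing step of Lemma \ref{adjacent2}.

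The main obstacle is the last step: one must verify that the completion $P$ can be arranged case by case (depending on whether the two other boundary curves of the pants containing $a$ lie in $P$ or in $\partial N$) so that enough such test curves $c_i$ really exist in the nonorientable neighborhood of $a$, and that the resulting infinite family genuinely contradicts a separating placement of $a'$. This is the same kind of local configuration analysis already performed in Lemmas \ref{1-sd} and \ref{adjacent3}, with the counting from Lemma \ref{tp} and the adjacency bijection from Lemmas \ref{adjacent} and \ref{nonadjacent} doing most of the bookkeeping.
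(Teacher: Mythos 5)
Your setup is sound and matches the intended strategy: complete $a$ to a pants decomposition $P$ spanning a top-dimensional maximal simplex, note that injectivity forces $\lambda([P])$ to span a top-dimensional maximal simplex as well, and invoke Lemma \ref{tp} to see that every curve of $P'$ is either separating or 1-sided with nonorientable complement (so once $a'$ is known to be 1-sided, the nonorientability of its complement is free). The special cases $(3,0)$ and $(2,2)$ are handled correctly. The paper itself gives no self-contained argument here -- it defers to Lemma 3.10 of \cite{Ir6} -- but the tools it lists (Lemmas \ref{tp}, \ref{smallint}, \ref{nonadjacent}, \ref{adjacent}) point to the counting argument that already appears in Case 1 of Lemma \ref{adjacent3}: every separating curve of $P$, except possibly one, is adjacent to curves of $P$ on both of its sides, so preservation of nonadjacency forces its image to be separating; by the curve count of Lemma \ref{tp} applied to $P'$, at least $g-1$ of the $g$ 1-sided curves of $P$ must then have 1-sided images, and the adjacency pattern of the specific curve $a$ (a 1-sided curve lies on the boundary of a single pair of pants of $N_P$, so all of its neighbors are ``on one side'') is what pins down $a$ itself.

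The genuine gap is in your closing contradiction. If $a'$ is separating, the curves $\lambda([c_i])$ are forced to meet $a'$ essentially and to be disjoint from $P'\setminus\{a'\}$, so they live in the union $Q$ of the two pairs of pants of $N_{P'}$ glued along $a'$. But $Q$ is a four-holed sphere, not a single pair of pants, and a four-holed sphere carries \emph{infinitely many} pairwise nonisotopic essential simple closed curves (powers of a Dehn twist applied to one of them). So producing infinitely many nonisotopic classes $\lambda([c_i])$ inside $Q$ yields no cardinality contradiction. The finiteness argument you are imitating from the end of Lemma \ref{adjacent2} works there only because the relevant curves are \emph{disjoint} from the curve $x'$ in question, so they are confined to a disjoint union of pants and M\"obius bands, which supports finitely many isotopy classes; your test curves necessarily cross $a'$, and you cannot choose them pairwise disjoint either (distinct 1-sided curves through the cross-cap neighborhood of $a$ intersect each other), so neither version of the finiteness argument applies. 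To close the proof you need the global separating-versus-1-sided count in $P'$ described above, not a local infinite-family argument.
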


\begin{proof} The proof follows as in the proof of Lemmas 3.10 given by the author in \cite{Ir6}, by using Lemma \ref{tp},
Lemma \ref{smallint}, Lemma \ref{nonadjacent}, Lemma \ref{adjacent}.\end{proof}\\

The following theorem is given by the author in \cite{Ir5}.

\begin{theorem}
\label{super} Let $N$ be a compact, connected, nonorientable surface of genus $g$ with
$n$ boundary components. Suppose that either $(g, n) \in \{(1, 0), (1, 1), (2, 0), (2, 1), (3, 0)\}$ or $g + n \geq 5$.
If $\lambda : \mathcal{C}(N) \rightarrow \mathcal{C}(N)$ is a superinjective simplicial map, then $\lambda$ is
induced by a homeomorphism $h : N \rightarrow N$.\end{theorem}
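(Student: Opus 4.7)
The plan is to prove this theorem along the lines of Ivanov's original argument for automorphisms of the curve complex on orientable surfaces \cite{Iv1}, adapted to nonorientable surfaces, and exploiting the fact that superinjectivity is a strictly stronger hypothesis than injectivity. Specifically, under superinjectivity the condition $i(\lambda([a]), \lambda([b])) = 0$ holds if and only if $i([a], [b]) = 0$, so the analogues of Lemma \ref{smallint}, Lemma \ref{nonadjacent} and Lemma \ref{adjacent} hold for every pair of curves, not only those in small-intersection position. This removes the main technical difficulty of Section 2 and lets the combinatorial arguments run cleanly.

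For the low-complexity cases $(g,n) \in \{(1,0), (1,1), (2,0), (2,1)\}$ the complex $\mathcal{C}(N)$ is either empty or has a very small vertex set, so the statement reduces to a finite check that any permutation of vertices respecting edges is induced by a homeomorphism. The case $(3,0)$ is also direct because pants decompositions corresponding to top dimensional simplices contain only 1-sided curves, and the types of those curves are severely constrained by Lemma \ref{tp}.

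For $g+n \geq 5$, I would first apply superinjective versions of the lemmas in Section 2, together with Lemma \ref{tp} and Lemma \ref{dim}, to show that $\lambda$ preserves the topological type of every vertex: separating curves, 1-sided curves with nonorientable complement, 1-sided curves with orientable complement, and 2-sided nonseparating curves (with orientable or nonorientable complement) are each sent to vertices of the same type. Type is detected combinatorially by the maximal dimension of a simplex containing the vertex, which depends on the type via Lemma \ref{dim}, refined by the pattern of adjacencies. Next I would show that $\lambda$ preserves the topological type of pairs, triples and, more generally, of multisets of disjoint curves, by extending them to top dimensional pants decompositions and using preservation of adjacency/nonadjacency.

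With topological types of curves and of configurations of disjoint curves preserved, I would pick a pants decomposition $P$ of $N$ corresponding to a top dimensional simplex; then $\lambda([P])$ is represented by a pants decomposition $P'$ of the same combinatorial and topological type, so there exists a homeomorphism $h : N \to N$ with $h([P]) = \lambda([P])$ as a set of isotopy classes and with $h$ realizing the induced bijection on the dual graph of $P$. I would then compare $\lambda$ with the simplicial map induced by $h$ on curves controlled by their intersection with $P$, starting with curves lying in a single pair of pants (or its neighborhood), extending across adjacent pants, and using the preservation of intersection zero and nonzero to conclude that $\lambda = h_\ast$ on every vertex of $\mathcal{C}(N)$. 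The main obstacle is making the combinatorial detection of topological type sharp enough in the nonorientable setting, where 1-sided curves produce top dimensional maximal simplices of several different dimensions and many of the classical orientable-surface identifications must be reworked case by case; this is the technical core of \cite{Ir5} on which the argument rests.
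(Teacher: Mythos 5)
The paper does not actually prove this statement: Theorem \ref{super} is imported verbatim from \cite{Ir5} (``The following theorem is given by the author in \cite{Ir5}''), so there is no argument in this paper to compare yours against step by step. With that caveat, your outline is consistent with the strategy that \cite{Ir5} (and the closely related \cite{AK}, \cite{Ir6}) follows, and with the way the present paper reuses that machinery: under superinjectivity the analogue of Lemma \ref{smallint} is immediate from the definition, one then shows that topological types of vertices and of top dimensional pants decompositions are preserved using Lemma \ref{dim}, Lemma \ref{tp} and preservation of adjacency and nonadjacency, one realizes $\lambda$ on a fixed top dimensional pants decomposition by a homeomorphism, and one propagates to all vertices. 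So as a description of the intended route it is accurate.

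Two cautions. First, your treatment of the sporadic cases is uneven: the cases $(1,0)$, $(1,1)$, $(2,0)$, $(2,1)$ do reduce to essentially finite checks, but $(3,0)$ does not --- the curve complex of the closed genus $3$ surface is infinite, and although Lemma \ref{tp} says its top dimensional pants decompositions consist of three mutually adjacent $1$-sided curves, a genuine argument is still required there (this is one of the delicate cases treated in \cite{A} and \cite{AK}). Second, the final propagation step (``compare $\lambda$ with $h_*$ pants by pants'') is where the real work lives on a nonorientable surface, since maximal simplices have varying dimension and the combinatorial detection of curve types must be reworked; your proposal explicitly defers this to ``the technical core of \cite{Ir5}.'' As a self-contained proof it is therefore a plan rather than a proof, but as a blind reconstruction of the approach the cited paper takes it is on target.
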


Now we state our main result:

\begin{theorem} Let $N$ be a compact, connected, nonorientable surface of genus $g$ with
$n$ boundary components. Suppose that $g + n \leq 3$ or $g + n \geq 5$. If
$\lambda : \mathcal{C}(N) \rightarrow \mathcal{C}(N)$ is an injective simplicial map,
then $\lambda$ is induced by a homeomorphism $h : N \rightarrow N$.\end{theorem}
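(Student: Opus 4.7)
The plan is to prove the theorem by reducing to Theorem \ref{super}: show that any injective simplicial map $\lambda$ is in fact superinjective, and then invoke the superinjective theorem to produce the homeomorphism $h$. Since Theorem \ref{super} covers $(g,n) \in \{(1,0),(1,1),(2,0),(2,1),(3,0)\}$ and all $g+n \geq 5$, once superinjectivity is established those cases close immediately. The only case in the main theorem's hypothesis not handled by Theorem \ref{super} is $(g,n)=(1,2)$, where the curve complex is combinatorially restrictive enough that I would handle it by an explicit inventory of vertex types and their allowed adjacencies, checking that an injective self-map has the form of a homeomorphism-induced map.

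For the remaining cases, the first main step is to establish that $\lambda$ preserves every topological type of vertex, not only one-sided curves with nonorientable complement. Lemma \ref{1-sided-cn-2} already delivers this for one type. Using Lemma \ref{tp} to enumerate the allowed vertex types inside any top dimensional pants decomposition, together with Lemma \ref{adjacent} to preserve adjacency in these decompositions and the dimension formula in Lemma \ref{dim} to count types, I would argue that separating curves go to separating curves of matching cutting type, two-sided nonseparating curves to two-sided nonseparating, and one-sided curves with orientable complement to the same. The argument mirrors the corresponding step in \cite{Ir6}, but with adjacency inside top dimensional simplices playing the role of the connectivity hypothesis used there.

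The second main step is the bootstrap from small intersection preservation (Lemma \ref{smallint}) to arbitrary nonzero intersection preservation, which is what superinjectivity demands. Given $a,b$ with $i([a],[b]) \neq 0$, I would argue by contradiction: if $\lambda([a])$ and $\lambda([b])$ admit disjoint representatives, then injectivity together with the type-preservation from the previous step allows the pair to be completed to a top dimensional pants decomposition $P'$ on $N$ whose combinatorial data, that is, vertex types, adjacency pattern, and counts from Lemma \ref{tp}, matches a top dimensional pants decomposition containing both $a$ and $b$ as disjoint curves on $N$, contradicting $i([a],[b]) \neq 0$. To make this contradiction rigorous one needs to chain from an arbitrary intersecting pair through a sequence of configurations where Lemmas \ref{smallint} and \ref{adjacent} apply directly.

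The main obstacle is precisely this last chaining step. Lemma \ref{smallint} only applies when two curves sit inside a pair of top dimensional pants decompositions differing in exactly one curve, a very restrictive intersection pattern. Pairs $a,b$ with larger geometric intersection need not fit into such a configuration, so constructing an intermediate chain $a = c_0, c_1, \ldots, c_k = b$ with each consecutive pair in a small-intersection or controlled-adjacency configuration requires delicate case analysis split by the topological types of $a$ and $b$ and by the parity of $g$ appearing in Lemma \ref{tp}. The low-complexity exceptional configurations such as $(1,2)$, $(1,4)$, and $(2,2)$ will almost certainly demand figure-based ad hoc arguments in the spirit of Lemmas \ref{1-sd} and \ref{adjacent3}, and this is where I expect the bulk of the remaining technical work to lie.
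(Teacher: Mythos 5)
There is a genuine gap, and it sits exactly where you placed your ``main obstacle.'' Your strategy is to prove that an injective $\lambda$ is actually superinjective and then cite Theorem \ref{super} as a black box. That requires upgrading Lemma \ref{smallint} from pairs in small-intersection position to \emph{arbitrary} pairs with $i([a],[b])\neq 0$, and you do not supply that upgrade; you only note that it would need a delicate chaining argument. Moreover, the contradiction you sketch for this step does not work as stated: if $\lambda([a])$ and $\lambda([b])$ had disjoint representatives, completing them to a top dimensional pants decomposition whose combinatorial data ``matches'' one containing a disjoint pair on $N$ says nothing about $a$ and $b$ themselves, so no contradiction with $i([a],[b])\neq 0$ is reached. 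In general, pairs of curves with large geometric intersection need not embed in any chain of top dimensional pants decompositions differing one curve at a time in a way the small-intersection lemma controls, so this reduction is not merely technical bookkeeping left to the reader --- it is the hard content, and it is missing.

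The paper avoids this entirely by taking a different route: rather than establishing full superinjectivity and then invoking Theorem \ref{super}, it re-traces the \emph{proof} of Theorem \ref{super} from \cite{Ir5} (by induction on $g$), substituting Lemmas \ref{smallint}, \ref{nonadjacent}, \ref{adjacent} and \ref{1-sided-cn-2} --- all proved here under injectivity alone --- at each point where that proof used consequences of superinjectivity. In other words, the lemmas of this paper are precisely the weaker intersection-preservation statements that the superinjective argument actually consumes, so full preservation of nonzero intersection is never needed. Your first main step (type preservation via Lemmas \ref{tp}, \ref{adjacent}, \ref{1-sided-cn-2}) is consistent with that plan, and your handling of the $g+n\leq 3$ cases matches the paper's; but to repair the proposal you should abandon the ``injective $\Rightarrow$ superinjective'' reduction and instead run the argument of \cite{Ir5} directly with the injectivity-based lemmas in place of superinjectivity.
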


\begin{proof} If $g + n \leq 3$, then the proof follows similar to proofs given for small genus cases
in \cite{Ir5}. For the other cases, the proof follows by following the proof of Theorem \ref{super} given in \cite{Ir5},
by using induction on $g$ and using Lemma \ref{smallint}, Lemma \ref{nonadjacent}, Lemma \ref{adjacent} and
Lemma \ref{1-sided-cn-2}.\end{proof}



eirmak@bgsu.edu

Bowling Green State University

Department of Mathematics and Statistics

Bowling Green, 43403, OH
\end{document}